\newtheorem{proposition}{Proposition}
\newtheorem{conjecture}{Conjecture}
\newtheorem{problem}{Problem}
\newtheorem{lemma}{Lemma}
\newtheorem{theorem}{Theorem}
\newtheorem{corollary}{Corollary}
\theoremstyle{definition}
\newtheorem{definition}{Definition}
\newtheorem{example}{Example}
\theoremstyle{remark}
\newtheorem {remark}{Remark}
\DeclareMathOperator{\Spec}{Spec}
\DeclareMathOperator{\Hom}{Hom}
\DeclareMathOperator{\Aut}{Aut}
\DeclareMathOperator{\SAut}{SAut}
\DeclareMathOperator{\SL}{SL}
\DeclareMathOperator{\reg}{reg}
\DeclareMathOperator{\Sp}{Sp}
\DeclareMathOperator{\cone}{cone}
\DeclareMathOperator{\Cl}{Cl}
\DeclareMathOperator{\codim}{codim}
\DeclareMathOperator{\rk}{rk}
\def\Ker{{\rm Ker}}
\def\PGL{{\rm PGL}}
\def\GG{{\mathbb G}}
\def\KK{{\mathbb K}}
\def\TT{{\mathbb T}}
\def\ZZ{{\mathbb Z}}
\def\SS{{\mathbb S}}
\def\QQ{{\mathbb Q}}
\def\PP{{\mathbb P}}
\def\AA{{\mathbb A}}
\newcommand{\XX}{\mathcal{X}}
\newcommand{\YY}{\mathcal{Y}}
\def\VVV{\mathcal{V}}
\def\WWW{\mathcal{W}}
\def\NNN{\mathcal{N}}
\def\AAA{\mathcal{A}}
\def\BBB{\mathcal{B}}
\def\OOO{\mathcal{O}}
\def\RRR{\mathfrak{R}}
\begin{document}
\date{}
\title[Gale duality and homogeneous toric varieties]{Gale duality and homogeneous toric varieties}
\author{Ivan Arzhantsev}
\thanks{The research was supported by the grant RSF-DFG 16-41-01013}
\address{National Research University Higher School of Economics, Faculty of Computer Science, Kochnovskiy Proezd 3, Moscow, 125319 Russia}
\email{arjantsev@hse.ru}

\subjclass[2010]{Primary 14J50, 14M17; \ Secondary 13A50, 14L30, 14R20}

\keywords{Toric variety, automorphism, divisor class group, Gale duality, Demazure root}

\maketitle

\begin{abstract}
A non-degenerate toric variety $X$ is called $S$-homogeneous if the subgroup of the automorphism group $\Aut(X)$ generated by root subgroups acts on $X$ transitively. We~prove that maximal $S$-homogeneous toric varieties are in bijection with pairs $(P,\AAA)$, where $P$ is an abelian group and $\AAA$ is a finite collection of elements in $P$ such that $\AAA$ generates the group $P$ and for every $a\in\AAA$ the element $a$ is contained in the semigroup generated by $\AAA\setminus\{a\}$. We show that any non-degenerate homogeneous toric variety is a big open toric subset of a maximal $S$-homogeneous toric variety. In particular, every homogeneous toric variety is quasiprojective. We conjecture that any non-degenerate homogeneous toric variety is $S$-homogeneous.
\end{abstract}

\section{Introduction}

Let $X$ be an irreducible algebraic variety over an algebraically closed field $\KK$ of characteristic zero. The variety $X$ is said to be \emph{toric} if $X$ is normal and admits an effective action $T\times X\to X$ of an algebraic torus $T$ with an open orbit. It is well known that toric varieties $X$ are characterized by fans of polyhedral cones $\Sigma_X$ in the vector space $N\otimes_{\ZZ}\QQ$, where $N$ is the lattice of one-parameter subgroups in $T$, see \cite{De,Oda, Fu,CLS}.

The linear Gale duality as defined in \cite{OP,BH} (see also \cite[Section~2.2]{ADHL}) provides an alternative combinatorial language for toric varieties. It is developed systematically in \cite{BH} under the name of bunches of cones in the divisor class group. As is mentioned in the Introduction to~\cite{BH}, this approach gives very natural description of geometric phenomena connected with divisors. The aim of the present paper is to show that this language is natural also for toric varieties homogeneous under automorphism groups.

An algebraic variety $X$ is said to be \emph{homogeneous} if the automorphism group $\Aut(X)$ acts on $X$ transitively. The class of homogeneous varieties is wide. In particular, it includes all homogeneous spaces of algebraic groups. It is an important problem to classify homogeneous varieties among varieties of a given type. In this paper we are interested in homogeneous toric varieties.

Let $X$ be a toric variety and $S(X)$ the subgroup of $\Aut(X)$ generated by all root subgroups in $\Aut(X)$. It is well known that root subgroups are in one-to-one correspondence with Demazure roots of the fan $\Sigma_X$, see \cite{De,Oda,Cox}. Nowadays Demazure roots and their generalizations became a central tool in many research projects, see \cite{Nill,Li,AKZ,Ba,AB,AHHL,AK}. For instance, Demazure roots and Gale duality was used in \cite{Ba} and \cite{AB} to describe orbits of the group $\Aut(X)$ on complete and affine toric varieties $X$, respectively.

We say that a toric variety $X$ is \emph{$S$-homogeneous} if the group $S(X)$ acts transitively on $X$. An $S$-homogeneous toric variety $X$ is said to be \emph{maximal} if it does not admit a proper open toric embedding $X\subseteq X'$ into an $S$-homogeneous toric variety $X'$ with
${\codim_{X'}(X'\setminus X)\ge 2}$.

Consider an abelian group $P$ and a collection $\AAA=\{a_1,\ldots,a_r\}$ of elements of $P$ (possibly with repetitions) that generates the group $P$. Denote by $A$ the semigroup generated
by~$\AAA$. We say that a collection $\AAA$ is \emph{admissible} if for every $a_i\in\AAA$
the semigroup generated by $\AAA\setminus\{a_i\}$ coincides with $A$. A pair $(P,\AAA)$, where $P$ is an abelian group and $\AAA$ is an admissible collection of elements of $P$,
is said to be \emph{equivalent} to a pair $(P',\AAA')$, if there is an isomorphism of abelian groups $\gamma\colon P\to P'$ such that $\gamma(\AAA)=\AAA'$.

\smallskip

Our main result provides an elementary description of maximal $S$-homogeneous toric varieties.

\begin{theorem} \label{tmain}
There is a one-to-one correspondence between maximal $S$-homogeneous toric varieties and equivalence classes of pairs $(P,\AAA)$, where $P$ is an abelian group and $\AAA$ is an admissible collection of elements in $P$.
\end{theorem}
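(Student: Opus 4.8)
The plan is to translate $S$-homogeneity into the Gale-dual/Cox language and then exhibit the maximal variety as a single orbit of a suitable group, so that separatedness comes for free. First I would fix the dictionary. For a non-degenerate $X$ with rays $\rho_1,\dots,\rho_r$ (primitive generators $v_1,\dots,v_r\in N$) the Cox construction writes $X=(\AA^r\setminus Z)/\!\!/H$, where $K=\Cl(X)$, $H=\Spec\KK[K]$ acts on $\KK[x_1,\dots,x_r]$ by the grading $\deg x_i=a_i:=[D_i]\in K$, and $Z$ is the irrelevant locus of the fan. By Demazure and Cox the root subgroups of $\Aut(X)$ correspond to the Demazure roots of $\Sigma_X$; in these coordinates each lifts to the locally nilpotent derivation $x^\mu\partial_{x_i}$ with $\mu\in\ZZ_{\ge0}^r$, $\mu_i=0$ and $\sum_j\mu_j a_j=a_i$ (I would cite \cite{De,Cox,AB,Ba,ADHL}). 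Writing $G\subseteq\Aut(\AA^r)$ for the subgroup generated by $H$ together with all these one-parameter unipotent subgroups, the torus $H$ normalizes every root subgroup, so $H\triangleleft G$ and $S(X)=G/H$. Two facts are crucial: a root subgroup of direction $i$ exists iff $a_i$ lies in the semigroup generated by $\{a_j:j\ne i\}$, and the Demazure condition involves only the rays $v_j$, not the cones of $\Sigma_X$, so $G$ depends only on the pair $(K,\AAA)$.

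Next the easy (necessity) direction. If some $a_i$ is not in the semigroup generated by the remaining $a_j$, then no root has direction $i$, hence every root subgroup fixes the coordinate hyperplane $\{x_i=0\}$ and $S(X)$ preserves the proper nonempty prime divisor $D_i$, contradicting transitivity; thus $S$-homogeneity forces $\AAA$ to be admissible. Since $\AAA$ always generates $K$ (the standard sequence $M\to\ZZ^r\to K\to0$), I obtain a map sending a maximal $S$-homogeneous $X$ to $(\Cl(X),\AAA)$; it is well defined on isomorphism classes because the Cox ring and the classes of its prime generators are intrinsic, so the multiset $\AAA\subseteq K$ is determined up to an automorphism of $K$.

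For surjectivity, given admissible $(P,\AAA)$ put $K=P$ and let $G$ act on $\AA^r$. Here is the key computation: at any point $q$ with all coordinates nonzero the root vector field $x^{\mu}\partial_{x_i}$ of direction $i$ evaluates to $x^\mu(q)\,e_i$ with $x^\mu(q)\ne0$; admissibility provides one such root for every $i$, so the tangent space to $G\cdot q$ contains $e_1,\dots,e_r$. Hence every orbit meeting the big stratum $(\KK^*)^r$ is open there, and by irreducibility $(\KK^*)^r$ is a single $G$-orbit. Using the direction-$i$ roots to switch coordinates on and off, a point lies in $G\cdot p^*$ (with $p^*=(1,\dots,1)$) iff its support is \emph{reachable} from $\{1,\dots,r\}$ by successively deleting an index $i$ whenever $a_i$ lies in the semigroup of the surviving degrees. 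Let $\RRR$ be this collection of supports and $U=\bigcup_{I\in\RRR}\{\,\Supp=I\,\}=G\cdot p^*$. A skip-the-deletions argument shows $\RRR$ is closed under enlarging supports, so $U$ is open; admissibility puts every support of size $\ge r-1$ in $\RRR$, so $\codim(\AA^r\setminus U)\ge2$. The decisive point is that $U$ is a \emph{single} $G$-orbit with $H\triangleleft G$: therefore $X:=U/H$ is the homogeneous space $S(X)/\text{(stabiliser)}$, which by Chevalley's theorem is a separated quasiprojective variety, and as a Cox quotient of an $H$-stable open subset of $\AA^r$ it is a normal toric variety with $\Cl(X)=K$ and boundary classes $\AAA$. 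This bypasses any direct verification of the fan/separatedness axioms, which I expect to be the main technical obstacle if attacked head-on.

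Finally maximality and bijectivity. If $X\subseteq X'$ is a proper open toric extension into an $S$-homogeneous variety with $\codim_{X'}(X'\setminus X)\ge2$, then $X'$ has the same rays and degrees, hence the same $G$; transitivity of $S(X')=G/H$ forces every support of $X'$ to be reachable, i.e.\ $\RRR'\subseteq\RRR$, whence $U'\subseteq U$ and $X'\subseteq X$, contradicting properness. Thus the constructed $X$ is maximal, and the same inequality $\RRR'\subseteq\RRR$ together with the forced presence of all codimension-one orbits shows it is the unique maximal $S$-homogeneous variety with data $(P,\AAA)$. The two assignments are mutually inverse and compatible with the equivalence of pairs, giving the bijection. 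The main obstacle throughout is rigour in the quotient theory---identifying the Cox good quotient with the geometric quotient and with the Chevalley homogeneous space, and recovering $\Cl(X)$ and the rays---rather than any single hard estimate.
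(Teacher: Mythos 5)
The decisive step of your plan---getting separatedness ``for free'' by realizing $X=U/H$ as a single orbit and invoking Chevalley's theorem---is exactly where the argument breaks. Chevalley's theorem applies to a linear algebraic group, but your $G\subseteq\Aut(\AA^r)$ is generated by $H$ together with, in general, infinitely many root subgroups and is not an algebraic group: already for $P=0$ and $\AAA=(0,\dots,0)$ every monomial derivation $x^\mu\partial_{x_i}$ with $\mu_i=0$ qualifies, and even two $\GG_a$-subgroups such as the flows of $y^2\partial_x$ and $x^2\partial_y$ generate an infinite-dimensional, non-algebraic subgroup of $\Aut(\AA^2)$. Hence $U/H$ cannot be presented as a quotient of an algebraic group by a closed subgroup, and no theorem guarantees that the geometric quotient of your open set $U$ by $H$ exists as a \emph{separated} variety: a priori, gluing the affine chart quotients $U_J/\!\!/H$ over generating supports $J$ only produces a toric prevariety, and transitivity of a group generated by algebraic subgroups on a prevariety does not by itself imply separatedness---that is precisely the point requiring proof. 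The paper confronts this head-on: in assertion (A1) of the proof of Proposition~\ref{propsuit} one must check that the cones $\sigma$ with $\Gamma(\sigma)=A$ pairwise intersect in a common face (this \emph{is} the separatedness of $X(P,\AAA)$), and this is done via the Gale-duality separation lemma (Lemma~\ref{sep}): for two such cones both Gale-dual cones equal $\cone(a\otimes 1,\,a\in\AAA)$ and hence share a relative interior point. Since you explicitly declare that your construction ``bypasses any direct verification of the fan/separatedness axioms,'' this is a genuine gap, not a routine detail.

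A secondary inaccuracy: your claim that $G$, and hence $S(X)=G/H$, depends only on the rays and the degrees ignores condition (R2) in the definition of a Demazure root. For a non-maximal fan an element satisfying (R1) need not be a root of $\Sigma$, i.e., its flow need not preserve $\hat X=\AA^r\setminus Z$, so the identification $S(X')=G/H$ is false in general; your maximality argument survives only because it merely needs that the actual root subgroups of $X'$ lift to a \emph{subset} of your generators, so that every support occurring in $\hat{X'}$ is reachable. Apart from these two points, your combinatorics is a correct Cox-coordinate translation of the paper's argument: your reachable supports are exactly the subcollections of $\AAA$ generating the semigroup $A$ (the paper's condition $\Gamma(\sigma)=A$ in Corollary~\ref{cormsrf}); admissibility putting all supports of size $\ge r-1$ into play matches Lemma~\ref{lemsuit} and Lemma~\ref{suit-adm}; your necessity direction parallels the paper's; and your inclusion of support families is assertion (A4). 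But as written, the existence of $X(P,\AAA)$ as a separated variety---the heart of the surjectivity direction---is not established, and repairing it essentially forces you back to the separation lemma or to a GIT/bunch-of-cones argument as in the paper.
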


If $X$ is a toric variety corresponding to a pair $(P,\AAA)$, then the group $P$ is isomorphic to the divisor class group $\Cl(X)$ and the collection $\AAA$ coincides with the set of classes of $T$-invariant prime divisors $[D_1],\ldots,[D_r]$. In particular, the dimension of $X$ equals
$r-\rk(P)$.

\smallskip

Let us give an overview of the content of the paper. In Section~\ref{s1} we recall basic facts on toric varieties and associated fans. Section~\ref{s2} contains some background on Demazure roots and corresponding root subgroups. We define strongly regular fans and prove that a toric variety is $S$-homogeneous if and only if the associated fan is strongly regular (Proposition~\ref{psh}).

In Section~\ref{s3} we collect basic properties of the linear Gale duality. Section~\ref{s4} provides a modification of this duality that takes into account a lattice containing a vector configuration. We call this modification the lattice Gale duality.

In Section~\ref{s5} we prove Theorem~\ref{tmain}. Along with the proof we give an explicit description of maximal strongly regular fans, see Corollary~\ref{cormsrf}. Section~\ref{s5-1} contains first properties and examples of $S$-homogeneous varieties and strongly regular fans.
Among others, we describe $S$-homogeneous toric varieties $X$ with $\Cl(X)=\ZZ$ and characterize strongly regular fans consisting of one-dimensional cones.

Section~\ref{s6} describes non-maximal strongly regular fans in terms of corresponding admissible collections. A natural class of non-degenerate homogeneous toric varieties form toric varieties homogeneous under semisimple group. Such varieties are classified in~\cite{AG}. In Section~\ref{s7} we characterize this class in terms of admissible collections.

In Section~\ref{s8} we show that any non-degenerate homogeneous toric variety is a big open toric subset of a maximal $S$-homogeneous toric variety. This implies that every homogeneous toric variety is quasiprojective. We conjecture that any non-degenerate homogeneous toric variety is $S$-homogeneous.


\section{Polyhedral fans and toric varieties} \label{s1}

In this section, we recall basic facts on the correspondence between rational polyhedral fans and toric varieties. For more details, we refer to~\cite{CLS,Fu,Oda}.

By a lattice $N$ we mean a free finitely generated abelian group. We consider the dual lattice $M:=\Hom(N,\ZZ)$ and the associated rational vector spaces $N_{\QQ}:=N\otimes_{\ZZ}\QQ$ and
$M_{\QQ}:=M\otimes_{\ZZ}\QQ$.

A cone in a lattice $N$ is a convex polyhedral cone in the space $N_{\QQ}$. If $\tau$ is a face of a cone~$\sigma$, we write $\tau\preceq\sigma$. We denote by $\sigma(k)$ the set of all $k$-dimensional faces of $\sigma$.

One-dimensional faces of a strictly convex cone are called rays. The primitive vectors of a strictly convex cone $\sigma$ in the lattice $N$ are the primitive lattice vectors on its rays.
A~strictly convex cone in $N$ is called \emph{regular} if the set of its primitive vectors can be supplemented to a basis of the lattice $N$.

A \emph{fan} in a lattice $N$ is a finite collection $\Sigma$ of strictly convex cones in $N$ such that for every $\sigma\in\Sigma$ all faces of $\sigma$ belong to $\Sigma$, and for every
$\sigma_i\in\Sigma$, $i=1,2$, we have $\sigma_1\cap\sigma_2\preceq\sigma_i$. We denote by $\Sigma(k)$ the set of all $k$-dimensional cones in $\Sigma$.
Let $|\Sigma|$ denote the \emph{support} of a fan $\Sigma$, that is the union of all cones in $\Sigma$. A fan is called \emph{regular} if all its cones are regular.

A \emph{toric variety} is a normal algebraic variety $X$ containing an algebraic torus $T$ as an open subset such that the left multiplication on $T$ can be extended to a regular action $T\times X\to X$.

Let $\Sigma$ be a fan in a lattice $N$. For every cone $\sigma\in\Sigma$ we define an affine toric variety $X_{\sigma}:=\Spec(\KK[\sigma^{\vee}\cap M])$, where $\sigma^{\vee}$ is the dual cone in $M_{\QQ}$ to the cone $\sigma$. Gluing together all varieties $X_{\sigma}$ along their isomorphic
open subsets one obtains a toric variety $X_{\Sigma}$. Conversely, any toric variety comes from some fan $\Sigma$ is the lattice $N$ of one-parameter subgroups of the acting torus $T$. The dual lattice $M$ may be interpreted as the lattice of characters of the torus $T$. For every $m\in M$, we denote by $\chi^m$ the corresponding character $T\to\KK^{\times}$.

It is well known that a toric variety $X_{\Sigma}$ is smooth if and only if the fan $\Sigma$ is regular. Further, $X_{\Sigma}$ is complete if and only if the fan $\Sigma$ is complete, that is
$|\Sigma|=N_{\QQ}$.

A toric variety $X_{\Sigma}$ is \emph{degenerate} if it is equivariantly isomorphic to the product of a nontrivial torus $T_0$ and a toric variety of smaller dimension $X_0$. By~\cite[Proposition~3.3.9]{CLS}, $X_{\Sigma}$ is degenerate if and only if there is an invertible non-constant regular function on $X_{\Sigma}$ or, equivalently, the rays in $\Sigma(1)$ do not span the space $N_{\QQ}$. A variety $X_{\Sigma}$ is homogeneous if and only if $X_0$ is homogeneous. So we assume further that $X_{\Sigma}$ is non-degenerate.


\section{Demazure roots and strongly regular fans} \label{s2}

Let $\Sigma$ be a fan in the space $N_{\QQ}$. We denote by $n_{\rho}$ the primitive lattice vector on a ray $\rho\in\Sigma(1)$. Let $N\times M\to\ZZ$, $(n,e)\to\langle n,e\rangle$ be the pairing of the dual lattices $N$ and $M$. For $\rho\in\Sigma(1)$ we consider the set $\RRR_{\rho}$ of all vectors $e\in M$ such that
\begin{enumerate}
\item[(R1)]
$\langle n_{\rho},e\rangle=-1\,\,\mbox{and}\,\, \langle n_{\rho'},e\rangle\geqslant0
\,\,\,\,\forall\,\rho'\in \sigma(1), \,\rho'\ne\rho$;
\smallskip
\item[(R2)]
if $\sigma$ is a cone in $\Sigma$ and $\langle v,e\rangle=0$ for all $v\in\sigma$, then the cone generated by $\sigma$ and $\rho$ is in $\Sigma$ as well.
\end{enumerate}
Note that condition~$(R1)$ implies condition~$(R2)$ if the support $|\Sigma|$ is convex.

The elements of the set $\RRR:=\bigsqcup\limits_{\rho\in\Sigma(1)}\RRR_{\rho}$ are called the \emph{Demazure roots} of the fan $\Sigma$, cf.~\cite[Definition~4]{De} and \cite[Section~3.4]{Oda}. If $e\in\RRR_{\rho}$ then $\rho$ is called the \emph{distinguished ray} of a root~$e$.

Let $X=X_{\Sigma}$ be a toric variety corresponding to the fan $\Sigma$. Denote by $\GG_a$ the additive group of the ground field $\KK$. It is well known that elements of $\RRR$ are in bijection with $\GG_a$-actions on $X$ normalized by the acting torus $T$, see~\cite[Th\'eoreme~3]{De} and \cite[Proposition~3.14]{Oda}. Let us denote the $\GG_a$-subgroup of $\Aut(X)$ corresponding to a root $e$ by $H_e$. Let $\rho_e$ be the distinguished ray corresponding to a root $e$, $n_e$ the primitive lattice vector on $\rho_e$, and $R_e$ the one-parameter subgroup of $T$ corresponding to $n_e$.

There is a bijection between cones $\sigma\in\Sigma$ and $T$-orbits $\OOO_{\sigma}$ on $X$ such that $\sigma_1\subseteq\sigma_2$ if and only if $\OOO_{\sigma_2}\subseteq\overline{\OOO_{\sigma_1}}$. Here $\dim\OOO_{\sigma}=\dim X -\dim\langle\sigma\rangle$.

\smallskip

The following proposition describes the action of the group $H_e$ on $X$. The proof can be found, for example, in \cite[Proposition~5]{AK}.

\begin{proposition} \label{connect}
For every point $x\in X\setminus X^{H_e}$ the orbit $H_e\cdot x$ meets exactly two $T$-orbits $\OOO_1$ and $\OOO_2$ on $X$ with $\dim\OOO_1=\dim\OOO_2+1$. The intersection
$\OOO_2\cap H_e\cdot x$ consists of a~single point, while
$$
\OOO_1\cap H_e\cdot x=R_e\cdot y \quad \text{for any} \quad y\in\OOO_1\cap H_e\cdot x.
$$
\end{proposition}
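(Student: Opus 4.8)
The plan is to analyze the $\GG_a$-action $H_e$ on $X$ directly through its description as a root subgroup, reducing everything to an explicit computation on an affine chart and then globalizing via the orbit-cone correspondence. The root $e\in\RRR_\rho$ with distinguished ray $\rho=\rho_e$ carries a one-parameter subgroup $R_e$ of $T$ generated by $n_e=n_\rho$, and the key structural fact is that $H_e$ and $R_e$ together generate a copy of a Borel subgroup of $\SL_2$ (or $\PGL_2$) acting on $X$; this is implicit in Demazure's original correspondence. I would first recall the explicit formula for the $H_e$-action on characters: writing $H_e=\{h(s):s\in\KK\}$, one has that $h(s)$ acts on a character $\chi^m$ by a rule governed by the pairing $\langle n_\rho, m\rangle$, so that the action shifts the $M$-grading by multiples of $e$.

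**First I would** pick a point $x\in X\setminus X^{H_e}$ lying in a $T$-orbit $\OOO_\sigma$ and determine how the cone $\sigma$ interacts with the root $e$. Condition $(R1)$ ensures $\langle n_\rho,e\rangle=-1$ while $e$ is nonnegative on the other rays of any cone, and condition $(R2)$ guarantees that the relevant larger cone lies in $\Sigma$, so the $H_e$-orbit of $x$ stays inside $X$. Using the orbit-cone correspondence, a point moves between $T$-orbits precisely when the $H_e$-flow crosses the locus where the pairing $\langle \cdot, e\rangle$ changes its vanishing behavior. I would show that the orbit $H_e\cdot x$ limits, as $s\to 0$ and $s\to\infty$, to two distinct $T$-orbits $\OOO_1$ and $\OOO_2$: the smaller orbit $\OOO_2$ corresponds to adding the ray $\rho$ to the cone of $x$ (so $\dim\OOO_2=\dim\OOO_1-1$), reflecting that the fixed-point set of $H_e$ inside the closure is one dimension lower.

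**The dimension count and the structure of the intersections** follow from the $\SL_2$-picture. On the affine chart, after choosing coordinates adapted to the grading by $e$, the $H_e$-orbit of $x$ is a one-dimensional curve (an affine line or its image), and the torus $R_e$ acts on this curve compatibly. The point where $H_e\cdot x$ meets $\OOO_2$ is the unique $H_e$-fixed limit in that boundary orbit, hence $\OOO_2\cap H_e\cdot x$ is a single point. By contrast, the intersection with the open orbit $\OOO_1$ is a full $R_e$-orbit: since $R_e$ normalizes $H_e$ and acts on $H_e\cdot x$ by rescaling the $\GG_a$-parameter, the set $\OOO_1\cap H_e\cdot x$ is exactly $R_e\cdot y$ for any $y$ in it. This is the algebraic shadow of the fact that in $\SL_2$ a generic orbit of a unipotent subgroup meets the big Bruhat cell in a torus translate and the opposite cell in a point.

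**The main obstacle** I anticipate is verifying rigorously that the orbit closure $\overline{H_e\cdot x}$ meets \emph{exactly} two $T$-orbits and no others, and pinning down the dimension drop to be exactly one. This requires a careful local analysis near the boundary using condition $(R2)$ to know which cones are actually present in $\Sigma$, so that the limit orbits $\OOO_1,\OOO_2$ are well defined and distinct. In practice I would reduce to the two-dimensional toric surface obtained by restricting attention to the plane spanned by $n_\rho$ and a transverse direction dual to $e$, where the combinatorics becomes a one-parameter family crossing a single wall; the general statement then follows by the equivariant structure of the $H_e$-action, which preserves the fibration transverse to $R_e$. Since the proof already exists in \cite[Proposition~5]{AK}, I would lean on that reference for the technical local computation and present the geometric $\SL_2$-reasoning as the conceptual skeleton.
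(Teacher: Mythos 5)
Your overall skeleton --- the explicit character formula for the $H_e$-action on a chart, the orbit--cone correspondence, and the normalization of $H_e$ by $R_e$ --- is the standard route, and your final fallback to \cite[Proposition~5]{AK} is in fact all the paper itself does: it states the proposition and cites that reference without proof. So as a pointer your proposal lands in the right place. However, two of the steps you offer as the conceptual core are wrong as stated and would fail if carried out. First, the point of $\OOO_2\cap H_e\cdot x$ is neither a limit nor an $H_e$-fixed point. Since $\mathrm{char}\,\KK=0$, the group $\GG_a$ has no nontrivial proper algebraic subgroups, so every point of the orbit of a non-fixed point has trivial stabilizer; in particular \emph{no} point of $H_e\cdot x$ is $H_e$-fixed (the paper's own remark after the proposition, that any point of $\OOO_1\cup\OOO_2$ can serve as $x$, makes the same point). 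The orbit map $\AA^1\to X$, $s\mapsto h(s)\cdot x$, hits $\OOO_2$ at a single \emph{finite} parameter value $s_0$, not ``as $s\to 0$ and $s\to\infty$''; uniqueness of $s_0$ is read off from the explicit formula $h(s)^{*}\chi^{m}=\chi^{m}\bigl(1+s\chi^{e}\bigr)^{\langle n_e,m\rangle}$, where $1+s\chi^{e}(x)=0$ is a linear equation in $s$ with exactly one root --- a computation, not a fixed-point argument.

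Second, your ``main obstacle'' paragraph replaces the orbit by its closure and proposes to show that $\overline{H_e\cdot x}$ meets exactly two $T$-orbits; that statement is false, so this is the wrong target. Already for $X=\PP^1$ with $H_e$ the translation subgroup and $x=1$, the closure is all of $\PP^1$ and meets three $T$-orbits ($\{0\}$, $\KK^{\times}$, $\{\infty\}$). The proposition concerns the orbit itself, which decomposes as one point in $\OOO_2$ (the image of $s_0$) together with the image of $\AA^1\setminus\{s_0\}$, which is exactly $R_e\cdot y\subseteq\OOO_1$. Relatedly, the inclusion $R_e\cdot y\subseteq H_e\cdot x$ does not follow from normalization alone: normalization only gives $R_e(t)\cdot(H_e\cdot x)=H_e\cdot(R_e(t)\cdot x)$, and the fact that $R_e(t)\cdot x$ stays on the same $H_e$-orbit is again extracted from the explicit formula above. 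With the closure-based and fixed-point-based reasoning removed and the chart computation done honestly (which is precisely what \cite[Proposition~5]{AK} carries out), the rest of your outline is sound.
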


A pair of $T$-orbits $(\OOO_1,\OOO_2)$ on $X$ is said to be \emph{$H_e$-connected} if
$H_e\cdot x\subseteq \OOO_1\cup\OOO_2$ for some $x\in X\setminus X^{H_e}$. By Proposition~\ref{connect}, we have $\OOO_2\subseteq\overline{\OOO_1}$ and $\dim\OOO_1=\dim\OOO_2+1$. Since the torus $T$ normalizes the subgroup $H_e$, any point of
$\OOO_1\cup\OOO_2$ can actually serve as a point $x$.

We say that a cone $\sigma_2$ in a fan $\Sigma$ is \emph{connected} with its facet $\sigma_1$ by a root $e\in\RRR$ if $e|_{\sigma_2}\le 0$ and $\sigma_1$ is given by the equation $\langle \cdot,e\rangle=0$ in $\sigma_2$.

\begin{lemma} \label{ule} \cite[Lemma~1]{AK}
A pair of $\TT$-orbits $(\OOO_{\sigma_1},\OOO_{\sigma_2})$ is $H_e$-connected if and only if
$\sigma_1$ is a facet of $\sigma_2$ and $\sigma_2$ is connected with $\sigma_1$ by the root $e$.
\end{lemma}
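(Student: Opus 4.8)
The plan is to prove Lemma~\ref{ule} by translating the geometric condition of $H_e$-connectedness (which I already fully understand from Proposition~\ref{connect}) into the combinatorial condition about cones, exploiting the orbit-cone correspondence in both directions. First I would recall the precise dictionary: $T$-orbits $\OOO_\sigma$ correspond bijectively to cones $\sigma\in\Sigma$, with the order-reversing property $\sigma_1\subseteq\sigma_2 \iff \OOO_{\sigma_2}\subseteq\overline{\OOO_{\sigma_1}}$ and $\dim\OOO_\sigma = \dim X - \dim\langle\sigma\rangle$. By Proposition~\ref{connect}, if $(\OOO_{\sigma_1},\OOO_{\sigma_2})$ is $H_e$-connected then $\OOO_{\sigma_2}\subseteq\overline{\OOO_{\sigma_1}}$ and $\dim\OOO_{\sigma_1}=\dim\OOO_{\sigma_2}+1$; translating via the dictionary gives $\sigma_1\subseteq\sigma_2$ and $\dim\langle\sigma_2\rangle = \dim\langle\sigma_1\rangle+1$, i.e.\ $\sigma_1$ is a facet of $\sigma_2$. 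This already extracts the codimension-one relationship for free.

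The substantive content is then to verify that the facet $\sigma_1\prec\sigma_2$ is \emph{connected by the root $e$} in the sense defined just above the lemma, namely that $e|_{\sigma_2}\le 0$ and that $\sigma_1$ is cut out inside $\sigma_2$ by the hyperplane $\langle\cdot,e\rangle=0$. For this I would work on the affine chart $X_{\sigma_2}=\Spec(\KK[\sigma_2^\vee\cap M])$, where the $\GG_a$-action $H_e$ acts explicitly on the character lattice. The standard description (from De~\cite{De}, Oda~\cite{Oda}) is that the root $e$ with distinguished ray $\rho_e$ acts on monomials by a derivation raising the weight by $e$; the fixed-point locus $X^{H_e}$ and the two orbits swept out by a one-parameter $\GG_a$-orbit are governed by the sign of $\langle\cdot,e\rangle$ on the cone. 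Concretely, I would analyze which point $x$ with $H_e\cdot x$ meeting $\OOO_{\sigma_1}$ and $\OOO_{\sigma_2}$ exists: by Proposition~\ref{connect} the larger orbit $\OOO_{\sigma_1}$ (smaller cone) is entered via the $R_e$-direction, which forces $n_{\rho_e}$ to lie in $\sigma_2$ but not in $\sigma_1$, and the facet $\sigma_1$ is precisely where the root evaluates to zero.

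The key step, and the one I expect to be the main obstacle, is pinning down the exact relationship between the distinguished ray $\rho_e$ and the pair $(\sigma_1,\sigma_2)$, and thereby establishing both the inequality $e|_{\sigma_2}\le 0$ and the facet-defining equation simultaneously. The delicate point is that $H_e$-connectedness is a statement about a single $\GG_a$-orbit crossing from one $T$-orbit into an adjacent one, and one must match the combinatorial data of \emph{which} facet is involved: condition~(R1) forces $\langle n_{\rho_e},e\rangle=-1$ and $\langle n_{\rho'},e\rangle\ge 0$ for the other rays, so on the cone $\sigma_2$ (which must contain $\rho_e$ as a ray) the linear form $e$ is $\le 0$ exactly when $\rho_e$ is a ray of $\sigma_2$, and the vanishing locus $\{\langle\cdot,e\rangle=0\}\cap\sigma_2$ is spanned by the remaining rays, giving the facet $\sigma_1$. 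For the converse direction I would run the same computation backwards: assuming $\sigma_1$ is a facet of $\sigma_2$ connected by $e$, I construct an explicit point $x\in\OOO_{\sigma_2}$ (e.g.\ a distinguished point of the orbit) and show via the explicit $\GG_a$-action that $H_e\cdot x$ exits $\OOO_{\sigma_2}$ into $\OOO_{\sigma_1}$ and nowhere else, using condition~(R2) to guarantee that the cone generated by $\sigma_1$ and $\rho_e$ — equivalently $\sigma_2$ — indeed lies in $\Sigma$ so that the orbit closure relations hold as needed. Since this lemma is quoted from \cite[Lemma~1]{AK}, I expect the cleanest route is to cite that computation for the explicit local analysis and devote the present argument to the orbit-cone translation, which is the conceptual heart.
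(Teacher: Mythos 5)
The paper gives no proof of this lemma at all: it is quoted verbatim from \cite[Lemma~1]{AK}, and your outline --- the orbit--cone dictionary to extract the facet relation from Proposition~\ref{connect}, followed by the explicit local analysis of the $H_e$-action on the chart $X_{\sigma_2}$ with the sign of $\langle\cdot,e\rangle$ and condition~(R1) pinning down $\rho_e$ as the extra ray, deferred to \cite{AK} --- is precisely the argument of the cited source, so your approach is correct and essentially the same. One small inaccuracy worth fixing: in the converse direction your appeal to condition~(R2) is superfluous, since $\sigma_2$ is a cone of $\Sigma$ by hypothesis (the orbit $\OOO_{\sigma_2}$ is given), so (R2) plays no role inside this lemma beyond making $e$ a root of the fan in the first place.
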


\begin{definition}
A fan $\Sigma$ is called \emph{strongly regular} if every nonzero cone $\sigma\in\Sigma$ is connected with some of its facets by a root.
\end{definition}

We denote by $S(X)$ the subgroup of $\Aut(X)$ generated by subgroups $H_e$, $e\in\RRR$. Let $G(X)$ be the subgroup of $\Aut(X)$ generated by $T$ and $S(X)$. A toric variety $X$ is said to be $S$-homogeneous if the group $S(X)$ acts on $X$ transitively.

\begin{proposition} \label{psh}
A non-degenerate toric variety $X_{\Sigma}$ is $S$-homogeneous if and only if the fan $\Sigma$ is strongly regular.
\end{proposition}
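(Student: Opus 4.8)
The plan is to prove both directions using the connectivity analysis of root subgroup orbits developed in Proposition~\ref{connect} and Lemma~\ref{ule}. The key observation is that the group $S(X)$ is generated by the $H_e$, and each $H_e$-orbit moves a point between exactly two adjacent $T$-orbits whose dimensions differ by one. Since the torus $T$ itself is \emph{not} assumed to lie in $S(X)$, I must be careful: I will show that the combination of the $H_e$-actions already generates enough motion to recover the $T$-action along the distinguished directions, so that transitivity of $S(X)$ is equivalent to the purely combinatorial strong regularity condition on $\Sigma$.

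First I would recall the orbit--cone correspondence and set up the partial order: $\OOO_{\sigma_2}\subseteq\overline{\OOO_{\sigma_1}}$ precisely when $\sigma_1\preceq\sigma_2$, with the open $T$-orbit $\OOO_{\{0\}}$ corresponding to the zero cone of maximal dimension $\dim X$. The strategy for the ``if'' direction is a descending induction on $\dim\sigma$. Given a nonzero cone $\sigma$, strong regularity provides a facet $\sigma'\prec\sigma$ and a root $e$ connecting $\sigma$ to $\sigma'$; by Lemma~\ref{ule} the pair $(\OOO_{\sigma'},\OOO_{\sigma})$ is $H_e$-connected, so by Proposition~\ref{connect} any point of $\OOO_\sigma$ can be moved by $H_e\subseteq S(X)$ into $\OOO_{\sigma'}$, a $T$-orbit of strictly larger dimension. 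Iterating, every point of every orbit can be pushed into the open orbit $\OOO_{\{0\}}$ by an element of $S(X)$. It then remains to show $S(X)$ acts transitively on the open orbit itself. Here I would use that the distinguished one-parameter subgroups $R_e$ arise inside $S(X)$: Proposition~\ref{connect} shows that the $H_e$-orbit of a point, intersected with the larger orbit, is a full $R_e$-orbit, so commutators and products of the $H_e$ generate the subtorus spanned by the $n_e$. Because $\Sigma$ is non-degenerate and strongly regular, the distinguished rays $\rho_e$ span $N_\QQ$, so these $R_e$ generate all of $T$; hence $S(X)\supseteq T$ acts transitively on $\OOO_{\{0\}}$, and combined with the descent, $S(X)$ acts transitively on all of $X$.

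For the converse, suppose $X_\Sigma$ is $S$-homogeneous but $\Sigma$ is \emph{not} strongly regular, so some nonzero cone $\sigma$ is connected to \emph{none} of its facets by any root. I would argue that then the orbit $\OOO_\sigma$ cannot be enlarged: for any root $e\in\RRR$, Lemma~\ref{ule} shows that $H_e$ moves a point of $\OOO_\sigma$ to a higher-dimensional orbit only if $\sigma$ is connected to one of its facets by $e$, which by assumption fails; and if $H_e$ does not $H_e$-connect $\OOO_\sigma$ to anything of higher dimension, it either fixes $\OOO_\sigma$ pointwise or maps it into $\overline{\OOO_\sigma}$ (an orbit of equal or smaller dimension). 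Thus no generator $H_e$, and hence no element of $S(X)$, can raise the dimension of the $T$-orbit containing a given point of $\OOO_\sigma$; since $\dim\OOO_\sigma<\dim X$, the point of $\OOO_\sigma$ cannot be carried into the open orbit, contradicting transitivity.

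\textbf{The main obstacle} I anticipate is the open-orbit step in the ``if'' direction: establishing that $S(X)$ contains the full torus $T$ (or at least acts transitively on the open orbit) rather than merely the subtorus generated by the $R_e$ of the roots actually present. One must verify that strong regularity forces the distinguished rays to span $N_\QQ$ --- this should follow because every ray $\rho\in\Sigma(1)$ must itself be connected to the zero cone by some root, making every primitive vector $n_\rho$ a distinguished vector $n_e$ --- together with the non-degeneracy hypothesis guaranteeing that $\Sigma(1)$ spans $N_\QQ$. A secondary subtlety is the precise mechanism by which products and commutators of the $\GG_a$-subgroups $H_e$ reconstruct the one-parameter subgroups $R_e$; I would extract this from the structure given by Proposition~\ref{connect}, namely that $\OOO_1\cap H_e\cdot x$ is exactly one $R_e$-orbit, which encodes the commutation relation between $H_e$ and its negative-root counterpart inside the $\SL_2$ or solvable subgroup they generate.
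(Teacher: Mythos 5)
Your converse direction and your descent step in the ``if'' direction are sound and coincide with the paper's argument: strong regularity plus Lemma~\ref{ule} pushes every point into the open $T$-orbit, and if some nonzero $\sigma$ is connected to no facet by a root, then each $H_e$ maps $\OOO_\sigma$ into $\overline{\OOO_\sigma}$, so this closure is a proper $S(X)$-invariant subset. The genuine gap is in your treatment of the open orbit. You claim that the one-parameter subgroups $R_e$ lie in $S(X)$, produced as ``commutators and products of the $H_e$'' via an $\SL_2$-type relation with a negative-root counterpart, and conclude $S(X)\supseteq T$. This is false, and the paper itself records the counterexample: for $X=\AA^n$ every root subgroup is of the form $x_i\mapsto x_i+s\,x^m$ with the monomial $x^m$ not involving $x_i$, hence preserves the standard volume form, so $S(\AA^n)$ lies in the volume-preserving subgroup and contains neither $T$ nor any $R_e$ (each $R_e$ scales a coordinate) --- yet $\AA^n$ is $S$-homogeneous by Example~\ref{ex1}. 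The mechanism you propose breaks down because the opposite of a Demazure root need not be a root: for $e=-e_1^*$ on $\AA^n$ there is no root with pairing $+1$ against $n_1$ and the required signs, so $H_e$ has no negative counterpart and the pair generates no torus.

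The repair is a weakening you already have in hand, and it is exactly what the paper does: Proposition~\ref{connect} gives an \emph{orbit-level} statement, not a group-level one. For $x$ in the open $T$-orbit, the intersection of $H_{e_i}\cdot x$ with the open orbit is the full orbit $R_{e_i}\cdot x$, so $R_{e_i}\cdot x\subseteq S(X)\cdot x$ even though $R_{e_i}\not\subseteq S(X)$. Iterating over the roots $e_1,\ldots,e_r$ connecting the rays with $\{0\}$, and noting that the homomorphism $(\KK^\times)^r\to T$, $(t_1,\ldots,t_r)\mapsto R_{e_1}(t_1)\cdots R_{e_r}(t_r)$, is surjective because the distinguished vectors $n_1,\ldots,n_r$ have full rank in $N$ (this is where non-degeneracy enters, as you correctly anticipated), one obtains $T\cdot x\subseteq S(X)\cdot x$: the open $T$-orbit is contained in a single $S(X)$-orbit. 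Combined with your descent (or, as in the paper, with Lemma~\ref{lll} and the $T$-invariance of an $S(X)$-orbit containing an open set), this yields transitivity. With this one substitution --- orbit containment in place of the false group containment --- your proof becomes the paper's proof.
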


Let us begin with a simpler observation.

\begin{lemma} \label{lll}
A fan $\Sigma$ is strongly regular if and only if the group $G(X)$ acts on~$X$ transitively.
\end{lemma}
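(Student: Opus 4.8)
The plan is to prove Lemma~\ref{lll}, which states that a fan $\Sigma$ is strongly regular if and only if the group $G(X)$, generated by the torus $T$ together with all root subgroups $H_e$, acts transitively on $X=X_\Sigma$. Since $G(X)$ contains $T$, its orbits are unions of $T$-orbits, and transitivity of $G(X)$ is equivalent to the statement that the whole of $X$ is a single $G(X)$-orbit. The natural strategy is therefore to analyze how the generators $H_e$ allow one to pass between $T$-orbits, using the combinatorial translation provided by Proposition~\ref{connect} and Lemma~\ref{ule}.

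First I would set up the correspondence between the orbit structure and the cone structure. Recall that $T$-orbits $\OOO_\sigma$ are indexed by cones $\sigma\in\Sigma$, with $\dim\OOO_\sigma=\dim X-\dim\langle\sigma\rangle$; the open orbit is $\OOO_{\{0\}}$ and the orbits of smallest dimension correspond to the maximal cones. Lemma~\ref{ule} tells us precisely that $H_e$ moves a point from $\OOO_{\sigma_1}$ into $\OOO_{\sigma_2}$ (and back) exactly when $\sigma_1$ is a facet of $\sigma_2$ and $\sigma_2$ is connected with $\sigma_1$ by the root $e$. The key reformulation is thus: two orbits $\OOO_{\sigma_1}$ and $\OOO_{\sigma_2}$ lie in the same $G(X)$-orbit whenever $\sigma_1,\sigma_2$ can be joined by a chain of cones in which consecutive members are related by ``is a facet, connected by some root.'' So $G(X)$ acts transitively if and only if \emph{every} cone $\sigma\in\Sigma$ can be connected back to the zero cone $\{0\}$ through such a chain, and I would make this ``reachability'' statement precise.

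For the ``if'' direction, assume $\Sigma$ is strongly regular. Then every nonzero cone $\sigma$ is connected with some facet $\sigma'$ by a root; since $\dim\sigma'=\dim\sigma-1$, by Lemma~\ref{ule} the orbit $\OOO_\sigma$ lies in the $G(X)$-orbit of $\OOO_{\sigma'}$. Inducting on $\dim\sigma$ (the descent strictly decreases cone dimension at each step and terminates at $\{0\}$), I conclude that every $\OOO_\sigma$ is in the $G(X)$-orbit of the open orbit $\OOO_{\{0\}}$, hence $G(X)$ is transitive. For the ``only if'' direction, suppose $G(X)$ is transitive and let $\sigma\in\Sigma$ be nonzero. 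Take $x\in\OOO_\sigma$ with $\dim\langle\sigma\rangle$ maximal among orbits not connected to a facet by a root, aiming for a contradiction; more cleanly, since $G(X)$ is transitive the point $x$ is moved out of $\OOO_\sigma$ by some generator, which cannot be $T$ (that fixes $\OOO_\sigma$ setwise) and so must be some $H_e$. By Proposition~\ref{connect} and Lemma~\ref{ule}, this forces $\sigma$ to be connected by $e$ either with a facet of $\sigma$ or with a cone having $\sigma$ as a facet; in the latter case one passes to a strictly larger-dimensional orbit, which cannot continue indefinitely, so every cone must eventually be connected \emph{down} to a facet. This yields strong regularity.

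The main obstacle I anticipate is the bookkeeping in the ``only if'' direction: Proposition~\ref{connect} describes an $H_e$-orbit meeting exactly two $T$-orbits of adjacent dimensions, but a priori an $H_e$-move could raise rather than lower $\dim\langle\sigma\rangle$, so I must argue that transitivity forces the existence of a \emph{descending} connection from every cone to the origin rather than merely some connection. The clean way to handle this is to define a relation ``$\sigma'\leftrightarrow\sigma$'' whenever $\sigma'$ is a facet of $\sigma$ connected by a root, observe via Lemma~\ref{ule} that the $G(X)$-orbits are exactly the equivalence classes of cones under the symmetric--transitive closure of this relation, and then note that transitivity of $G(X)$ means all cones are equivalent to $\{0\}$. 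Strong regularity is precisely the statement that every nonzero cone has an edge of $\leftrightarrow$ to a \emph{lower-dimensional} neighbor; I would verify that connectivity-to-$\{0\}$ in this graph is equivalent to this local descent condition by a dimension-induction argument, which is where the care is needed.
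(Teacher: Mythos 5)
Your ``if'' direction and your reformulation via Proposition~\ref{connect} and Lemma~\ref{ule} (that $G(X)$-orbits are unions of $T$-orbits glued along $H_e$-connected pairs, so transitivity amounts to connectivity of the graph whose vertices are the cones of $\Sigma$ and whose edges join a cone to a facet connected with it by a root) are correct and agree with the paper. The gap is in the ``only if'' direction, and it is genuine. First, your sketch has the bookkeeping reversed: if $\sigma$ is a facet of $\sigma_2$ and $\sigma_2$ is connected with $\sigma$ by $e$, then $H_e$ moves points of $\OOO_\sigma$ into $\OOO_{\sigma_2}\subseteq\overline{\OOO_\sigma}$, i.e.\ to a \emph{smaller}-dimensional orbit; and in any case a termination argument (``cannot continue indefinitely'') produces no root connecting the \emph{original} cone $\sigma$ with one of \emph{its} facets --- a path in your graph may ascend from $\sigma$ to some $\tau$ and then descend from $\tau$ through a different facet $\sigma'\ne\sigma$, reaching $\{0\}$ while $\sigma$ itself never acquires a descending edge. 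Second, the equivalence you defer --- connectivity of every cone to $\{0\}$ iff every nonzero cone has an edge to one of its own facets --- is precisely the hard content of the lemma, and it is \emph{false} as a statement about abstract graphs of cones, so no ``dimension-induction argument'' on the graph alone can prove it: take $\{0\},\rho_1,\rho_2$ and $\sigma=\cone(\rho_1,\rho_2)$ with edges $\sigma$--$\rho_1$, $\sigma$--$\rho_2$ and $\rho_1$--$\{0\}$; this graph is connected, yet $\rho_2$ has no edge to its unique facet $\{0\}$. What saves the lemma is the global nonnegativity in condition $(R1)$: for instance, a root connecting $\sigma$ with $\rho_1$ has distinguished ray $\rho_2$ and therefore automatically connects $\rho_2$ with $\{0\}$. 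Any correct proof must exploit this structure of Demazure roots, which your plan never invokes.

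The paper closes the gap with a short contrapositive argument that you should adopt in place of the deferred induction: if some nonzero $\sigma\in\Sigma$ is connected with none of its facets by a root, then by Lemma~\ref{ule} the orbit $\OOO_\sigma$ can occur in an $H_e$-connected pair only as the \emph{larger} orbit $\OOO_1$, so every root subgroup maps $\OOO_\sigma$ into $\overline{\OOO_\sigma}$; since each $h\in H_e$ is an automorphism of $X$, it maps $\overline{\OOO_\sigma}$ into $\overline{h\cdot\OOO_\sigma}\subseteq\overline{\OOO_\sigma}$, so $\overline{\OOO_\sigma}$ is a proper closed subset invariant under $S(X)$ and under $T$, hence under $G(X)$, contradicting transitivity. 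This one-paragraph argument requires no analysis of paths in the graph at all.
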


\begin{proof}
Without loss of generality we may assume that $X$ is non-degenerate. Suppose that the fan $\Sigma$ is strongly regular. By Lemma~\ref{ule}, every point from a non-open $T$-orbit in $X$ can be sent by some subgroup $H_e$ to a $T$-orbit of higher dimension. It shows that every point on $X$ can be sent by an element of $S(X)$ to an open $T$-orbit, and thus the group $G(X)$ acts transitively on $X$.

Conversely, suppose that the fan $\Sigma$ is not strongly regular. Let $\sigma$ be a nonzero cone in $\Sigma$ which is not connected with any its facet by a root. By Lemma~\ref{ule}, the image of the orbit $\OOO_{\sigma}$ under the action of any root subgroup $H_e$ is contained in its closure $\overline{\OOO_{\sigma}}$. Hence the closure is invariant under the group $S(X)$. This implies that $\overline{\OOO_{\sigma}}$ is a proper $G(X)$-invariant subset in $X$, a contradiction.
\end{proof}

\begin{proof}[Proof of Proposition~\ref{psh}] It remains to show that the group $S(X)$ acts on $X$ transitively for every non-degenerate toric variety with a strongly regular fan $\Sigma$. Let $\rho_1,\ldots,\rho_r$ be the rays of~$\Sigma$. We denote by $e_i$ a root connecting the ray $\rho_i$ with its (unique) facet $\{0\}$. By~Proposition~\ref{connect}, the orbits of the root subgroup $H_{e_i}$ intersected with the open $T$-orbit on $X$ coincide with the orbits of the one-parameter subtorus $R_{e_i}$ represented by the vectors $n_i:=n_{e_i}$ in the lattice $N$.
Since $X$ is non-degenerate, the collection of vectors $n_1,\ldots,n_r$ has full rank in $N$. Thus the open $T$-orbit on $X$ is contained in one $S(X)$-orbit. Containing an open subset on $X$,
this $S(X)$-orbit is $T$-invariant. Lemma~\ref{lll} implies that such an orbit coincides with $X$.
\end{proof}

\begin{corollary}
Every strongly regular fan is regular.
\end{corollary}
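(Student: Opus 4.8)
The plan is to prove the stronger statement that every cone $\sigma$ of a strongly regular fan $\Sigma$ is regular, proceeding by induction on $\dim\sigma$. The base cases $\dim\sigma\le 1$ require nothing: the empty set extends to a basis of $N$, and a single primitive lattice vector is always part of a basis, so every ray is regular in an arbitrary fan. I would then fix $d\ge 2$, assume all cones of $\Sigma$ of dimension less than $d$ are regular, and treat a cone $\sigma\in\Sigma$ with $\dim\sigma=d$.

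For the inductive step I would invoke strong regularity to connect $\sigma$ with a facet $\tau$ by a root $e$, so that $e|_\sigma\le 0$ and $\tau=\{v\in\sigma:\langle v,e\rangle=0\}$; write $\rho_e$ for the distinguished ray of $e$ and $n_e$ for its primitive vector, with $\langle n_e,e\rangle=-1$. The first key point to establish is that $\sigma=\cone(\tau,n_e)$. For every ray $\rho$ of $\sigma$ one has $\langle n_\rho,e\rangle\le 0$ because $e|_\sigma\le 0$, while condition (R1) gives $\langle n_\rho,e\rangle\ge 0$ for all rays $\rho\ne\rho_e$; hence every ray of $\sigma$ other than $\rho_e$ lies in $\ker e$, that is, in $\tau$. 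Since $\tau\ne\sigma$ and a strictly convex cone is generated by its rays, at least one ray of $\sigma$ must lie outside $\ker e$, and it can only be $\rho_e$. Thus $\rho_e$ is the unique ray of $\sigma$ not contained in $\tau$, and the primitive vectors of $\sigma$ are exactly those of $\tau$ together with $n_e$.

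The remaining, lattice-theoretic, step is where the induction hypothesis enters. Because $\langle n_e,e\rangle=-1$, the homomorphism $\langle\cdot,e\rangle\colon N\to\ZZ$ is surjective, so $e$ is primitive, $L:=\{v\in N:\langle v,e\rangle=0\}$ is a saturated sublattice of corank one, and $N=L\oplus\ZZ n_e$. The primitive vectors $v_1,\dots,v_{d-1}$ of the regular cone $\tau$ lie in $L$, and by induction they extend to a basis of $N$, hence generate a direct summand of $N$; being contained in $L$, this summand is then also a direct summand of $L$ (saturation in $N$ implies saturation in $L$, since a subgroup of the free group $N/\langle v_1,\dots,v_{d-1}\rangle$ is free). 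Consequently $v_1,\dots,v_{d-1}$ extend to a basis of $L$, and appending $n_e$ produces a basis of $N=L\oplus\ZZ n_e$ containing all primitive vectors of $\sigma$. Therefore $\sigma$ is regular, closing the induction.

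The two points I would treat with the most care are precisely the possible obstacles: first, that the root conditions really force $\sigma$ to be spanned by $\tau$ and the single vector $n_e$, so that no additional ray can obstruct regularity; and second, the passage of regularity from $N$ down to the hyperplane sublattice $L$, which hinges on transitivity of saturation together with $\langle n_e,e\rangle=-1$ giving the splitting $N=L\oplus\ZZ n_e$. As a sanity check one can also note a softer argument: by Proposition~\ref{psh} (after splitting off the torus factor in the degenerate case) a strongly regular fan corresponds to a homogeneous toric variety, which is therefore smooth, and a toric variety is smooth exactly when its fan is regular. I would nonetheless prefer the combinatorial induction above, as it is self-contained and does not require reducing to the non-degenerate case.
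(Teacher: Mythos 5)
Your proof is correct, and it takes a genuinely different route from the paper's. The paper disposes of this corollary in one line of geometry: by Lemma~\ref{lll} (which contains the reduction to the non-degenerate case) and Proposition~\ref{psh}, a strongly regular fan corresponds to a toric variety on which $G(X)$ acts transitively, every homogeneous variety is smooth, and a toric variety is smooth if and only if its fan is regular --- this is precisely the ``softer argument'' you relegate to a sanity check at the end. Your main argument instead stays entirely inside the lattice: by induction on $\dim\sigma$, the root $e$ connecting $\sigma$ to a facet $\tau$ pins down the ray structure ($e|_\sigma\le 0$ together with condition (R1) --- read, as clearly intended, over all rays of $\Sigma$, the ``$\sigma(1)$'' there being a typo for ``$\Sigma(1)$'' --- forces every ray of $\sigma$ other than $\rho_e$ into $\ker e$, so the primitive generators of $\sigma$ are those of $\tau$ together with $n_e$), and the equation $\langle n_e,e\rangle=-1$ yields the splitting $N=L\oplus\ZZ n_e$ with $L=\{v\in N:\langle v,e\rangle=0\}$, through which the basis-extension property passes from $\tau$ (induction hypothesis plus transitivity of saturation, which you justify correctly via freeness of $L/\langle v_1,\ldots,v_{d-1}\rangle$ as a subgroup of the free group $N/\langle v_1,\ldots,v_{d-1}\rangle$) up to $\sigma$. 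Both delicate points you flag are handled soundly. As for what each approach buys: the paper's proof is shorter but imports the geometric dictionary (smoothness of homogeneous varieties, smooth $\Leftrightarrow$ regular) and the degenerate-case reduction, whereas yours is self-contained combinatorics, applies verbatim when the rays of $\Sigma$ do not span $N_\QQ$, and yields slightly finer information --- an explicit basis of $N$ adapted to each pair $\tau\preceq\sigma$, in the spirit of the root computations in the proof of assertion (A2) of Proposition~\ref{propsuit}.
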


\begin{proof}
It follows from the fact that every homogeneous variety is smooth.
\end{proof}

\begin{example} \label{ex1}
The only non-degenerate smooth affine toric variety is the affine space $\AA^n$. Clearly, this variety is $S$-homogeneous, so a regular cone together with all its faces is a strongly regular fan.
\end{example}

\begin{example} \label{ex2}
The automorphism group $\Aut(X)$ of a complete toric variety $X$ is a linear algebraic group,
see~\cite{De,Cox,Nill}. It implies that $X$ is homogeneous if and only if $X$ is $S$-homogeneous.
It is well known that the only homogeneous complete toric varieties are products of projective spaces $\PP^{n_1}\times\ldots\times\PP^{n_m}$, cf. \cite[Theorem~3.9]{Ba}. This implies that complete strongly regular fans are precisely the products of fans of projective spaces.
\end{example}

\begin{remark}
It turns out that properties of regular fans and strongly regular fans are rather different. For example, any subfan of a regular fan is regular, and for strongly regular fans this is not the case. At the same time, there exists at most one maximal strongly regular fan on a given set of rays (see Proposition~\ref{propsuit} below), while some sets of rays can give rise to several maximal (e.g. complete) regular fans.
\end{remark}

\begin{remark}
For an $S$-homogeneous toric variety $X$, the groups $S(X)$ and $G(X)$ may coincide and may not. For example, for the toric variety $X=\PP^n$ the full automorphism group $\PGL(n+1)$ is generated by root subgroups, while for $X=\AA^n$ root subgroups preserve the volume form on $\AA^n$ and the acting torus $T$ does not.
\end{remark}

In general, the subgroup $S(X)$ of the automorphism group $\Aut(X)$ generated by root subgroups may be relatively small. Following~\cite{AKZ}, let us denote by $\SAut(X)$ the subgroup of $\Aut(X)$ generated by all $\GG_a$-subgroups in $\Aut(X)$. The following (non-toric) example shows that the groups $\SAut(X)$ and $S(X)$ may not coincide.

\begin{example}
Let $X$ be an affine variety $\Spec(A)$, where
$$
A=\KK[x,y,z,u,w]/(x+x^2y+z^2+u^3).
$$
Consider a one-dimensional torus action
$$
t\cdot(x,y,z,u,w)=(x,y,z,u,tw).
$$
Denote by $S(X)$ the subgroup of $\Aut(X)$ generated by $\GG_a$-subgroups normalized by the torus.
It is shown in~\cite[Example~3.2]{Li} that any $S(X)$-orbit on $X$ is contained in a subvariety $x=\text{const}$. At the same time, the result of~\cite{Du} implies that there is no non-constant invariant regular function for the action of $\SAut(X)$ on $X$.
\end{example}


\section{Linear Gale duality} \label{s3}
In this section we follow the presentation in \cite[Section~2.2.1]{ADHL}, see also~\cite{OP}.
By a \emph{vector configuration} in a vector space $V$ we mean a finite collection of vectors $v_1,\ldots,v_r\in V$ (possibly with repetitions) that spans the space $V$. A vector configuration $\VVV=\{v_1,\ldots,v_r\}$ in a rational vector space $V$ and a vector configuration $\WWW=\{w_1,\ldots,w_r\}$ in a rational vector space $W$ are \emph{Gale dual} to each other if the following conditions hold:
\begin{enumerate}
\item[(i)]
We have $v_1\otimes w_1+\ldots+v_r\otimes w_r=0$ in $V\otimes W$.
\item[(ii)]
For any rational vector space $U$ and any vectors $u_1,\ldots,u_r\in U$ with
$v_1\otimes u_1+\ldots+v_r\otimes u_r=0$ in $V\otimes U$, there is a unique linear map
$\psi\colon W\to U$ with $\psi(w_i)=u_i$ for $i=1,\ldots,r$.
\item[(iii)]
For any rational vector space $U$ and any vectors $u_1,\ldots,u_r\in U$ with
$u_1\otimes w_1+\ldots+u_r\otimes w_r=0$ in $U\otimes W$, there is a unique linear map
$\phi\colon V\to U$ with $\psi(v_i)=u_i$ for $i=1,\ldots,r$.
\end{enumerate}

If we fix the first configuration in a Gale dual pair, then the second one is determined up to isomorphism. Therefore one configuration is called the \emph{Gale transform} of the other.

Consider vector configurations $\VVV=\{v_1,\ldots,v_r\}$ and $\WWW=\{w_1,\ldots,w_r\}$ in vector spaces $V$ and $W$ respectively, and let $V^*$ be the dual vector space of $V$. Then Gale duality of $\VVV$ and $\WWW$ is characterized by the following property: For any tuple $(a_1,\ldots,a_r)\in\QQ^r$ one has
$$
a_1w_1+\ldots+a_rw_r=0 \ \Longleftrightarrow \ l(v_i)=a_i \ \text{for} \ i=1,\ldots,r \
\text{with some} \ l\in V^*.
$$
Let us present a construction which produces the Gale dual for a configuration $\VVV=\{v_1,\ldots,v_r\}$ in a space $V$. Take the vector space $\QQ^r$ and consider the surjective linear map $\alpha\colon\QQ^r\to V$ given on the standard basis $e_1,\ldots,e_r$ in $\QQ^r$ by $\alpha(e_i)=v_i$, $i=1,\ldots,r$. Consider two mutually dual short exact sequences
$$
\xymatrix{
0
\ar@{->}[rr]
&&
\Ker(\alpha)
\ar@{->}[rr]
&&
\QQ^r
\ar@{->}[rr]^{\alpha}
&&
V
\ar@{->}[rr]
&&
0
\\
0
\ar@{<-}[rr]
&&
(\Ker(\alpha))^*
\ar@{<-}[rr]^{\beta}
&&
(\QQ^r)^*
\ar@{<-}[rr]
&&
V^*
\ar@{<-}[rr]
&&
0
}
$$
Let $e_1^*,\ldots,e_r^*$ be the dual basis in $(\QQ^r)^*$.
Setting $W=(\Ker(\alpha))^*$ and $w_i=\beta(e_i^*)$ for $i=1,\ldots,r$, we obtain the Gale dual configuration $\WWW=\{w_1,\ldots,w_r\}$.

We finish this section with a variant of the separation lemma, cf. \cite[Lemma 4.3]{BH} or \cite[Lemma~2.2.3.2]{ADHL}. Let $\VVV$ be a vector configuration in a rational vector space $V$.
Denote by $\rho_i$ the ray in $V$ spanned by the vector $v_i$ from $\VVV$.
Consider two strictly convex polyhedral cones $\sigma$ and $\sigma'$ in $V$ with
$$
\sigma(1)=\{\rho_i, i\in I\} \quad \text{and} \quad \sigma'(1)=\{\rho_j, j\in J\}.
$$

\begin{lemma} \label{sep}
Let $(W,\WWW)$ be the linear Gale transform of $(V,\VVV)$. Then the intersection of the cones $\sigma$ and $\sigma'$ is a face of each of them if and only if the cones $\cone(w_k, k\notin I)$ and $\cone(w_s, s\notin J)$ in the space $W$ have a common interior point.
\end{lemma}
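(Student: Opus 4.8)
The plan is to reduce both sides of the claimed equivalence to one and the same condition on the original configuration $\VVV$, namely the existence of a suitable separating linear functional on $V$, and to move between the two pictures through the duality property displayed just before the lemma. Throughout I write $I^c$ and $J^c$ for the complementary index sets, $C:=\cone(w_k, k\in I^c)$, $C':=\cone(w_s, s\in J^c)$ and $\tau:=\sigma\cap\sigma'$. By an \emph{interior point} I mean a point of the relative interior, which for a finitely generated cone is exactly the set of combinations of the generators with all coefficients strictly positive.

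First I would translate the right-hand side. A point lies in $\relint(C)\cap\relint(C')$ precisely when $\sum_{k\in I^c}b_k w_k=\sum_{s\in J^c}c_s w_s$ for some $b_k>0$ and $c_s>0$. Collecting this into a single relation $\sum_{i=1}^r a_i w_i=0$ and reading off the coefficient block by block shows that such a point exists if and only if there is a relation $\sum_i a_i w_i=0$ with $a_i>0$ for $i\in J\setminus I$, $a_i<0$ for $i\in I\setminus J$, $a_i=0$ for $i\in I\cap J$, and $a_i$ arbitrary for $i\in I^c\cap J^c$; for the nontrivial implication one absorbs the free indices by adding a large positive constant to both $b_i$ and $c_i$. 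By the duality property stated before the lemma, a relation $\sum_i a_i w_i=0$ exists exactly when $a_i=l(v_i)$ for some $l\in V^*$. Hence the right-hand side is equivalent to the existence of $l\in V^*$ with $l(v_i)=0$ for $i\in I\cap J$, $l(v_i)<0$ for $i\in I\setminus J$ and $l(v_j)>0$ for $j\in J\setminus I$.

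It then remains to prove the classical fact that such a functional exists if and only if $\tau$ is a common face of $\sigma$ and $\sigma'$. For the easy implication I would check directly that such an $l$ is a supporting functional: it is nonpositive on $\sigma$ and nonnegative on $\sigma'$, so $\tau\subseteq\Ker(l)$; and because $l$ is strictly negative on the rays of $\sigma$ outside $\tau$ and strictly positive on those of $\sigma'$ outside $\tau$, one obtains $\sigma\cap\Ker(l)=\sigma'\cap\Ker(l)=\cone(v_i, i\in I\cap J)$, which forces this cone to equal $\tau$ and to be a face of each. For the converse I would first observe that, since $\tau$ is a face, its rays are exactly the $\rho_i$ with $i\in I\cap J$, and then build $l$ by passing to the quotient $\overline{V}:=V/\langle\tau\rangle$: the images $\overline{\sigma},\overline{\sigma'}$ are pointed cones meeting only in the origin, their generators $\overline{v_i}$ ($i\in I\setminus J$) and $\overline{v_j}$ ($j\in J\setminus I$) are nonzero since $\langle\tau\rangle\cap\sigma=\tau$, and a functional strictly separating two pointed cones that meet only at the origin pulls back to the desired $l$.

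The main obstacle is this last, purely convex-geometric, step: producing a functional that vanishes exactly on the common rays and is strictly signed on all the remaining rays of both cones, rather than merely weakly separating the two cones. The quotient reduction isolates the essential point, that two pointed polyhedral cones whose intersection is the origin admit a strict separator; this in turn follows from the fact that the difference cone $\overline{\sigma}-\overline{\sigma'}$ is again pointed and hence lies in an open half-space together with the origin. Everything else — the bookkeeping of signs across the four index blocks $I\cap J$, $I\setminus J$, $J\setminus I$, $I^c\cap J^c$, and the passage through the duality property — is routine.
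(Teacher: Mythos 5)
Your proposal is correct and follows essentially the same route as the paper: both translate the face condition into the existence of $l\in V^*$ vanishing on $v_i$, $i\in I\cap J$, and strictly signed on $v_i$, $i\in I\setminus J$, and $v_j$, $j\in J\setminus I$, then use the displayed duality property to identify such $l$ with sign-constrained relations $\sum_i a_iw_i=0$, which are exactly the common relative interior points of $\cone(w_k,\,k\notin I)$ and $\cone(w_s,\,s\notin J)$. The only difference is that the paper takes the separation criterion for a common face as known (cf.\ its references to \cite{BH} and \cite{ADHL}), whereas you supply a correct proof of it via the quotient $V/\langle\tau\rangle$ and strict separation of pointed cones.
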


\begin{proof}
The intersection of the cones $\sigma$ and $\sigma'$ is a face of each of them if and only if
there is a linear function $l\in V^*$ such that
$$
l(v_i)\ge 0 \quad \text{for all} \quad i\in I, \quad
l(v_j)\le 0 \quad \text{for all} \quad j\in J,
$$
and for any $s\in I\cup J$ we have $l(v_s)=0$ if and only if $s\in I\cap J$. This condition means that there is a relation
$$
\sum_{i\in I\setminus J} \alpha_iw_i - \sum_{j\in J\setminus I} \beta_j w_j +\sum_{t\notin I\cup J} \gamma_tw_t=0
$$
with some positive rational coefficients $\alpha_i, \beta_j$ and some rational coefficients $\gamma_t$. This relation is equivalent to
$$
\sum_{k\notin I} \mu_kw_k=\sum_{s\notin J} \nu_sw_s
$$
with some positive rational coefficients $\mu_k$ and $\nu_s$. The latter relation means that the cones $\cone(w_k, k\notin I)$ and $\cone(w_s, s\notin J)$ have a common interior point.
\end{proof}


\section{Lattice Gale transform} \label{s4}
A \emph{vector configuration} $\NNN$ in a lattice $N$ is a finite collection of vectors $n_1,\ldots,n_r\in N$ that spans the vector space $N_{\QQ}$. Consider the lattice $\ZZ^r$ with the standard basis $e_1,\ldots,e_r$ and the exact sequence
$$
\xymatrix{
0
\ar@{->}[rr]
&&
L
\ar@{->}[rr]
&&
\ZZ^r
\ar@{->}[rr]^{\alpha}
&&
N
}
$$
defined by $\alpha(e_i)=n_i$, $i=1,\ldots,r$. Let us identify the dual lattice of $\ZZ^r$
with $\ZZ^r$ using the dual basis $e_1^*,\ldots,e_r^*$. Let $M:=\Hom(N,\ZZ)$. The homomorphism
$M\to\ZZ^r$ dual to $\alpha$ gives rise to the short exact sequence of abelian groups
$$
\xymatrix{
\ \ \ \ \ \ \ \ \ \ \ \ \ \ \ \ \ \ \ \ 0
\ar@{<-}[rr]
&&
P
\ar@{<-}[rr]^{\beta}
&&
\ZZ^r
\ar@{<-}[rr]
&&
M
\ar@{<-}[rr]
&&
0 \ \ \ \ \ \ \ \ \ \ \ \ \  (*)
}
$$
Let $a_i=\beta(e_i^*)$ with $i=1,\ldots,r$. By construction, the vectors $a_1,\ldots,a_r$ generate the group~$P$. We call the collection $\AAA=\{a_1,\ldots,a_r\}$ the \emph{lattice Gale transform} of the configuration~$\NNN$. Replacing all groups in these sequences by their tensor products with $\QQ$, we obtain the linear Gale duality considered above.

Conversely, given elements $a_1,\ldots,a_r$ that generate a group $P$, we can reconstruct sequence~$(*)$, the lattice $N=\Hom(M,\ZZ)$, the dual homomorphism $\ZZ^r\to N$ and thus the vectors $n_1,\ldots,n_r$.

\begin{remark} \label{remus}
Let $\NNN=\{n_1,\ldots,n_r\}$ be a vector configuration in a lattice $N$. The vector $n_1$ is a primitive vector in $N$ if and only if the vectors $a_2,\ldots,a_r$ generate the group $P$. Indeed, $n_1$ is primitive if and only if there is an element $e\in M$ such that $\langle n_1,e\rangle=\pm 1$, or, equivalently, there is a relation $a_1+\alpha_2a_2+\ldots+\alpha_ra_r=0$ with some integer $\alpha_2,\ldots,\alpha_r$.

More generally, a subset $n_i, i\in I$ can be supplemented to a basis of $N$ if and only if for any $i\in I$ the element $a_i$ lies in the subgroup generated by $a_j, j\notin I$.
\end{remark}

\begin{example}
The lattice Gale transform of the configuration $\NNN=\{n_1,n_2\}$ in $N=\ZZ^2$ with $n_1=(1,0)$ and $n_2=(1,2)$ is the collection $\AAA=\{a_1,a_2\}$ in the group $P=\ZZ/2\ZZ$ with $a_1=a_2=\overline{1}$. At the same time, the linear Gale transform of the configuration $\VVV=\{v_1,v_2\}$ in $V=\QQ^2$ with $v_1=(1,0)$ and $v_2=(1,2)$ is the collection $\WWW=\{0,0\}$ in the space $W=\{0\}$.
\end{example}

Now we are going to establish a relation between the lattice Gale duality and Demazure roots.

\begin{definition} \label{defsuit}
A vector configuration $\NNN=\{n_1,\ldots,n_r\}$ in a lattice $N$ is called \emph{suitable} if
for any $i=1,\ldots,r$ there exists a vector $e_i\in\Hom(N,\ZZ)$ such that
$\langle n_i,e_i\rangle=-1$ and $\langle n_j,e_i\rangle\ge 0$ for all $j\ne i$.
\end{definition}

We recall that a collection $\AAA=\{a_1,\ldots,a_r\}$ of elements (possibly with repetitions) of an abelian group $P$ is \emph{admissible} if $\AAA$ generates the group $P$ and for any $a_i\in\AAA$ the element $a_i$ is contained in the semigroup generated by $\AAA\setminus\{a_i\}$.

\begin{lemma} \label{suit-adm}
A vector configuration $\NNN=\{n_1,\ldots,n_r\}$ in a lattice $N$ is suitable if and only if its lattice Gale transform $\AAA$ in $P$ is an admissible collection.
\end{lemma}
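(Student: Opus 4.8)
The plan is to translate both conditions into statements about $\Ker(\beta)$ using the exactness of the sequence $(*)$, and then to match them index by index.

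First I would fix notation for the dual map. Write $\alpha^\vee\colon M\to\ZZ^r$ for the homomorphism dual to $\alpha$; under the identification of $(\ZZ^r)^*$ with $\ZZ^r$ via the dual basis it is given explicitly by $\alpha^\vee(e)=\sum_{j=1}^r\langle n_j,e\rangle\,e_j^*$ for $e\in M$. Since the vectors $n_j$ span $N_\QQ$, the map $\alpha^\vee$ is injective, and exactness of $(*)$ says precisely that $\Im(\alpha^\vee)=\Ker(\beta)$. Thus a tuple $(c_1,\ldots,c_r)\in\ZZ^r$ satisfies $\sum_j c_j a_j=0$ in $P$ if and only if there is an $e\in M$ with $\langle n_j,e\rangle=c_j$ for all $j$.

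The key step is then a per-index reformulation, for each fixed $i$. On one hand, the existence of a vector $e_i$ as in the definition of suitability is exactly the assertion that $\Im(\alpha^\vee)$ contains a tuple whose $i$-th coordinate equals $-1$ and whose remaining coordinates are nonnegative. On the other hand, by the previous paragraph such a tuple lies in $\Ker(\beta)$, so its existence is equivalent to a relation $-a_i+\sum_{j\ne i}c_j a_j=0$ in $P$ with nonnegative integers $c_j$, that is, to $a_i$ lying in the semigroup generated by $\AAA\setminus\{a_i\}$. Reading this equivalence in both directions (recovering the witness $e_i$ from a semigroup relation, and conversely reading off the relation from $e_i$ via $c_j=\langle n_j,e_i\rangle$) settles the equivalence at index $i$.

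Finally I would assemble the two halves. Suitability is the conjunction over all $i$ of the first condition, while admissibility is the conjunction over all $i$ of the semigroup condition together with the requirement that $\AAA$ generate $P$; but the latter holds automatically, since $\beta$ is surjective and $a_j=\beta(e_j^*)$. Hence the two conjunctions coincide, which proves the lemma. The argument is essentially formal once the duality is set up, and it runs parallel to the computation in Remark~\ref{remus}; the only point demanding care is the bookkeeping of signs and of the direction of $\alpha^\vee$, so that the value $-1$ of $\langle n_i,e_i\rangle$ matches the coefficient of $a_i$ after it is moved to the other side of the semigroup relation. I do not expect a genuine obstacle beyond this sign bookkeeping.
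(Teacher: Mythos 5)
Your proof is correct and takes essentially the same route as the paper's: both translate the semigroup relation $a_i=\sum_{j\ne i}\alpha_j a_j$ into the existence of $e_i\in\Hom(N,\ZZ)$ with $\langle n_i,e_i\rangle=-1$ and $\langle n_j,e_i\rangle=\alpha_j\ge 0$, using that relations among the $a_j$ correspond to elements of $M$ via the exact sequence $(*)$. You merely spell out the identification $\Ker(\beta)=\Im(\alpha^\vee)$ and note that $\AAA$ generates $P$ automatically by surjectivity of $\beta$, both of which the paper's terser proof leaves implicit.
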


\begin{proof}
An element $a_i\in\AAA$ is contained in the semigroup generated by $\AAA\setminus\{a_i\}$ if and only if we have $a_i=\sum_{j\ne i}\alpha_ja_j$ for some non-negative integers $\alpha_j$. The latter condition means that there exists an element $e_i\in\Hom(N,\ZZ)$ with
$$
\langle n_i,e_i\rangle=-1 \quad \text{and} \quad \langle n_j,e_i\rangle=\alpha_j \quad
\text{for all} \quad j\ne i.
$$
\end{proof}


\section{Proof of Theorem~\ref{tmain}} \label{s5}

We begin this section with some preliminary results.  A collection of rays $\rho_1,\ldots,\rho_r$ in the space $N_{\QQ}$ is said to be \emph{suitable} if the set of primitive lattice vectors on these rays is a suitable vector configuration.

\begin{lemma} \label{lemsuit}
For a strongly regular fan $\Sigma$, the collection of rays $\Sigma(1)$ is suitable.
\end{lemma}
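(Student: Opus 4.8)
The plan is to show that for a strongly regular fan $\Sigma$ with rays $\rho_1,\ldots,\rho_r$, each ray can be connected to the origin by a root, which by Definition~\ref{defsuit} is precisely the suitability condition. The key observation is that the definition of strong regularity only guarantees that \emph{each nonzero cone} is connected with \emph{some} of its facets by a root; it does not immediately hand us the specific roots $e_i$ with $\langle n_i,e_i\rangle=-1$ and $\langle n_j,e_i\rangle\ge 0$ for $j\ne i$ that suitability demands. So the work is to produce these roots from the chain of connectedness relations the hypothesis provides.

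First I would fix a ray $\rho_i\in\Sigma(1)$, with primitive vector $n_i$, and apply the strong regularity hypothesis to the one-dimensional cone $\sigma_2=\rho_i$ itself. Its unique facet is the zero cone $\{0\}$, so there must exist a root $e\in\RRR$ connecting $\rho_i$ with $\{0\}$. Unwinding the definition of ``connected by a root'' (the displayed definition before Definition of strongly regular), this means $e|_{\rho_i}\le 0$ and that $\{0\}$ is cut out in $\rho_i$ by the equation $\langle\cdot,e\rangle=0$; together these force $\langle n_i,e\rangle<0$, and since $e\in\RRR_{\rho_i}$ satisfies condition (R1), in fact $\langle n_i,e\rangle=-1$. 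I would then set $e_i:=e$. The heart of the argument is that this $e$, being a genuine Demazure root with distinguished ray $\rho_i$, automatically satisfies (R1), which gives $\langle n_{\rho'},e\rangle\ge 0$ for every other ray $\rho'$ lying in a cone $\sigma$ with $\rho_i\in\sigma(1)$.

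The main obstacle I anticipate is the following gap: condition (R1) as stated only controls $\langle n_{\rho'},e\rangle$ for rays $\rho'$ belonging to cones $\sigma$ that also contain $\rho_i$, i.e. rays adjacent to $\rho_i$ in the fan, whereas suitability requires $\langle n_j,e_i\rangle\ge 0$ for \emph{all} $j\ne i$, including rays not sharing a cone with $\rho_i$. I would resolve this by invoking Lemma~\ref{ule} and Proposition~\ref{connect} at the level of orbits: the root subgroup $H_{e_i}$ has the property that, on the open torus orbit, its action is governed by the one-parameter subgroup $R_{e_i}$, and the primitive vector $n_i$ equals $n_{e_i}$. In a strongly regular (hence regular, by the Corollary) fan, the combinatorics of how $e_i$ pairs against the remaining rays is rigid enough that the values $\langle n_j,e_i\rangle$ cannot be negative for $j\ne i$; any ray on which $e_i$ took a negative value would itself have to be the distinguished ray of $e_i$, contradicting that $\rho_i$ is distinguished.

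Thus the structure is: strong regularity $\Rightarrow$ each ray is a root-connected cone over $\{0\}$ $\Rightarrow$ each ray is the distinguished ray of a Demazure root $e_i$ $\Rightarrow$ (R1) delivers $\langle n_i,e_i\rangle=-1$ and nonnegativity against the other rays $\Rightarrow$ the configuration $\{n_1,\ldots,n_r\}$ is suitable. The one step requiring genuine care is the passage from the local nonnegativity of (R1) to nonnegativity against all rays; I expect this to follow either from the definition of $\RRR_\rho$ applied with $\sigma$ ranging over all cones of $\Sigma$ (note that each $\rho_j$ lies in some cone of $\Sigma$, and suitability of the \emph{whole} configuration is what we are after), or by a short direct argument that a root's pairing with a primitive ray vector is never $-2$ or less and is negative only on its own distinguished ray.
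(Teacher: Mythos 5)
Your proof is correct and is essentially the paper's own one-line argument: a ray $\rho_i$ has $\{0\}$ as its unique facet, so strong regularity supplies a root $e_i$ connecting $\rho_i$ with $\{0\}$; connectedness forces $\langle n_i,e_i\rangle<0$, hence $\rho_i$ must be the distinguished ray of $e_i$ with $\langle n_i,e_i\rangle=-1$, and condition (R1) then gives exactly the functional demanded by Definition~\ref{defsuit}. The obstacle occupying the second half of your write-up is illusory: in (R1) the symbol $\sigma(1)$ is a typo for $\Sigma(1)$ (note that $\sigma$ is not quantified anywhere in (R1), and the condition as intended is the standard definition of a Demazure root going back to Demazure and Oda), so by definition a root pairs nonnegatively with \emph{every} ray of the fan other than its distinguished one --- which is precisely the global nonnegativity that suitability requires. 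Consequently your detour through Proposition~\ref{connect} and Lemma~\ref{ule} is unnecessary, and as sketched the orbit-level ``rigidity'' argument would not actually close the gap if (R1) were genuinely local; your fallback remark, that a root's pairing with a primitive ray generator is negative only on its own distinguished ray, is not something to be proved but is the definition itself.
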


\begin{proof}
By definition of a strongly regular fan, every ray $\rho_i$ is connected with its facet $\{0\}$ by a root $e_i$. Then the vector $e_i$ satisfies the conditions of Definition~\ref{defsuit}.
\end{proof}

\begin{definition}
A strongly regular fan $\Sigma$ is \emph{maximal} if it cannot be realized as a proper subfan of a strongly regular fan $\Sigma'$ with $\Sigma'(1)=\Sigma(1)$.
\end{definition}

\begin{proposition} \label{propsuit}
For every suitable collection of rays $\rho_1,\ldots,\rho_r$ in $N_{\QQ}$ there exists a unique maximal strongly regular fan $\Sigma$ with $\Sigma(1)=\{\rho_1,\ldots,\rho_r\}$.
\end{proposition}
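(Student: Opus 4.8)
The plan is to carry everything over to the lattice Gale dual and then write down one explicit fan. Let $n_1,\dots,n_r$ be the primitive vectors on $\rho_1,\dots,\rho_r$, let $(P,\AAA)$ with $\AAA=\{a_1,\dots,a_r\}$ be their lattice Gale transform, and let $A$ be the semigroup generated by $\AAA$; by Lemma~\ref{suit-adm} the collection $\AAA$ is admissible. The starting point is the dictionary furnished by the exact sequence $(*)$: a tuple $(c_1,\dots,c_r)\in\ZZ^r$ equals $(\langle n_i,e\rangle)_i$ for some $e\in\Hom(N,\ZZ)$ if and only if $\sum_i c_ia_i=0$ in $P$. Reading this against the definition of a connecting root yields the key equivalence: for $I\subseteq\{1,\dots,r\}$ and $i_0\in I$, the cone $\sigma_I:=\cone(n_i:i\in I)$ is connected with the facet opposite $\rho_{i_0}$ by a functional satisfying $(R1)$ if and only if $a_{i_0}$ lies in the semigroup generated by $\{a_j:j\notin I\}$. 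I would therefore introduce, for each $I$, the condition $(\star_I)$ that the semigroup generated by $\{a_j:j\notin I\}$ equals $A$, and set
$$
\Sigma\ :=\ \bigl\{\,\sigma_I \ :\ (\star_I)\ \text{holds}\,\bigr\}.
$$
Since $(\star_I)$ only becomes easier to satisfy as $I$ shrinks, $\Sigma$ is closed under passing to subsets; together with Remark~\ref{remus} (which shows $(\star_I)$ forces $\{n_i:i\in I\}$ to be part of a lattice basis, so $\sigma_I$ is a regular, hence strictly convex simplicial cone with ray set exactly $\{\rho_i:i\in I\}$) this makes $\Sigma$ closed under taking faces. Admissibility is precisely $(\star_{\{i\}})$, so $\Sigma(1)=\{\rho_1,\dots,\rho_r\}$.

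The fan axiom is where Gale duality does the work. For $\sigma_I\in\Sigma$, condition $(\star_I)$ in particular places every $a_i$ into the semigroup generated by $\{a_j:j\notin I\}$, so over $\QQ$ the image $w_i$ of each $a_i$ lies in $\cone(w_k:k\notin I)$; hence $\cone(w_k:k\notin I)=\cone(w_1,\dots,w_r)=:\Omega$, which is full-dimensional because the $w_i$ span $W$. Thus for any two cones $\sigma_I,\sigma_J\in\Sigma$ the complementary cones $\cone(w_k:k\notin I)$ and $\cone(w_s:s\notin J)$ both equal $\Omega$ and so share interior points, and Lemma~\ref{sep} gives that $\sigma_I\cap\sigma_J$ is a common face. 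Therefore $\Sigma$ is a fan.

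I expect the main obstacle to be showing that $\Sigma$ is strongly regular, because a connecting functional must be a \emph{Demazure} root, i.e.\ it must also satisfy $(R2)$. Given a nonzero $\sigma_I\in\Sigma$ and any $i_0\in I$, I would use $(\star_I)$ to choose a relation $a_{i_0}=\sum_{j\notin I}c_ja_j$ with integers $c_j\ge0$ and let $e$ be the corresponding functional; it satisfies $(R1)$ and connects $\sigma_I$ with the facet opposite $\rho_{i_0}$. To verify $(R2)$, take any $\sigma_K\in\Sigma$ contained in the hyperplane $e^{\perp}$. Then $K$ avoids $\{i_0\}$ and also the support $\{j\notin I:c_j>0\}$ of the chosen relation, precisely because $\langle n_k,e\rangle=0$ for $k\in K$; hence the same relation still expresses $a_{i_0}$ through $\{a_j:j\notin K\cup\{i_0\}\}$, which combined with $(\star_K)$ yields $(\star_{K\cup\{i_0\}})$, that is $\cone(\sigma_K,\rho_{i_0})=\sigma_{K\cup\{i_0\}}\in\Sigma$. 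This is exactly $(R2)$. The delicate point is this bookkeeping: the containment $\sigma_K\subseteq e^{\perp}$ is what forces $K$ to miss the support of the relation, so that deleting $K$ cannot destroy the semigroup membership witnessing $a_{i_0}$.

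Finally, maximality and uniqueness follow from reading the dictionary in reverse. If $\Sigma'$ is any strongly regular fan with $\Sigma'(1)=\{\rho_1,\dots,\rho_r\}$ and $\sigma_I\in\Sigma'$, then (using that strongly regular fans are regular, so the cones of $\Sigma'$ are among the $\sigma_I$) I would peel $\sigma_I$ down to $\{0\}$ by roots: strong regularity connects $\sigma_I$ to a facet $\sigma_{I\setminus\{i_1\}}$ by a Demazure root of $\Sigma'$, giving $a_{i_1}\in$ semigroup$(a_j:j\notin I)$, and iterating on the faces accumulates $a_{i_k}\in$ semigroup$(a_j:j\notin I)$ for every $i_k\in I$, i.e.\ $(\star_I)$. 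Hence $\sigma_I\in\Sigma$ and $\Sigma'\subseteq\Sigma$. Since $\Sigma$ is itself strongly regular, it is the unique maximal strongly regular fan on the prescribed rays, which is the assertion of the proposition.
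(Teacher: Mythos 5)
Your proof is correct and follows essentially the same route as the paper: you construct the identical candidate fan (your condition $(\star_I)$ is the paper's $\Gamma(\sigma)=A$), verify the fan axioms via Lemma~\ref{sep}, prove strong regularity including the $(R2)$ check by the same support-of-the-relation argument, and obtain maximality and uniqueness by the same chain-of-roots peeling showing any strongly regular fan on these rays is a subfan. The only (welcome) difference is that you make explicit a few details the paper leaves implicit, such as the complementary Gale cones all coinciding with the full cone $\cone(w_1,\dots,w_r)$ and the regularity of the cones $\sigma_I$ via Remark~\ref{remus}.
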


\begin{proof}
Let $\Omega$ be the set of strictly convex polyhedral cones $\sigma$ in $N_{\QQ}$ with $\sigma(1)\subseteq\{\rho_1,\ldots,\rho_r\}$. With every $\sigma\in\Omega$ one associates a subset
$I\subseteq\{1,\ldots,r\}$ as $\sigma(1)=\{\rho_i, i\in I\}$.

Let $\AAA=\{a_1,\ldots,a_r\}$ be the lattice Gale transform of the vector configuration $\NNN=\{n_1,\ldots,n_r\}$. Denote by $\Gamma(\sigma)$ the semigroup in $P$ generated by $a_j$, $j\notin I$. In particular, we have $\Gamma(\{0\})=A$, where $A$ is the semigroup generated by $\AAA$.

Let
$$
\Sigma=\Sigma(P,\AAA):=\{\sigma \in\Omega \ ; \ \Gamma(\sigma)=A\}.
$$

We have to check four assertions.

\begin{enumerate}
\item[(A1)]
$\Sigma$ is a fan and $\Sigma(1)=\{\rho_1,\ldots,\rho_r\}$.
\item[(A2)]
The fan $\Sigma$ is strongly regular.
\item[(A3)]
The fan $\Sigma$ is maximal.
\item[(A4)]
Every strongly regular fan $\hat{\Sigma}$ with $\hat{\Sigma}(1)=\{\rho_1,\ldots,\rho_r\}$ is a subfan of $\Sigma$.
\end{enumerate}

We start with (A1). By definition, if $\tau$ is a face of a cone $\sigma$ from $\Omega$, then
$\tau$ is in $\Omega$ and $\Gamma(\sigma)$ is contained in $\Gamma(\tau)$. In particular, if
$\Gamma(\sigma)=A$ then $\Gamma(\tau)=A$ as well. This shows that a face of a cone from $\Sigma$ is contained in $\Sigma$.

We have to check that the intersection of two cones from $\Sigma$ is a face of each of them. This follows from Lemma~\ref{sep}.

\smallskip

We proceed with (A2). Let $\sigma\in\Sigma$ and $\tau$ be a facet of $\sigma$. We take $\rho_i\in\sigma(1)\setminus\tau(1)$. Assume that $\sigma(1)=\{\rho_k, k\in I\}$ for a subset $I$ in $\{1,\ldots,r\}$. Since $\Gamma(\sigma)=A$, we have
$$
a_i=\sum_{j\in\{1,\ldots,r\}\setminus I} \alpha_ja_j \quad \text{with some} \quad
\alpha_j\in\ZZ_{\ge 0}.
$$
It means that there is a vector $e\in\Hom(N,\ZZ)$ with
$$
\langle n_i,e\rangle=-1, \quad \langle n_j, e\rangle\ge 0 \quad \text{for all} \quad j\ne i, \quad
\text{and} \quad \langle n_k, e\rangle=0 \quad \text{for all} \quad k\in I\setminus\{i\}.
$$
In particular, all rays of the cone $\sigma$ except for $\rho_i$ lie in the hyperplane
$\langle\cdot,e\rangle=0$ and thus we have $\sigma(1)=\tau(1)\cup\{\rho_i\}$.

We still have to prove that the element $e$ is a Demazure root of the fan $\Sigma$. Condition $(R1)$ obviously holds. Let us check condition $(R2)$. Let $\sigma'\in\Sigma$ and $e|_{\sigma'}=0$. We have to show that the cone $\sigma''=\cone(\sigma',\rho_i)$ is in $\Sigma$. The condition
$\sigma'\in\Sigma$ means that the elements $a_s$ with $\rho_s\notin\sigma'(1)$ generate the semigroup $A$. The condition $e|_{\sigma'}=0$ implies that the element $a_i$ is a non-negative integer linear combination of the elements $a_k$ with $\rho_k\notin\sigma'(1)$ and $k\ne i$. This shows that the elements $a_k$ generate the semigroup $A$ as well, thus $\Gamma(\sigma'')=A$ and
$\sigma''\in\Sigma$.

We conclude that any nonzero cone in $\Sigma$ is connected by a root with any its facet, and the fan $\Sigma$ is strongly regular.

\smallskip

We come to (A3). Assume that we can add to the fan $\Sigma$ some cones $\sigma_1,\ldots,\sigma_m$ from $\Omega$ and obtain a strongly regular fan $\Sigma'$. For every $\sigma_i$ there is a chain of facets $\{0\}\preceq\ldots\preceq\sigma_i'\preceq\sigma_i$ connected by roots of the fan $\Sigma'$. Hence we have $$
\Gamma(\sigma_i)=\Gamma(\sigma_i')=\ldots=\Gamma(\{0\})=A
$$
and $\sigma_i\in\Sigma$, a contradiction.
\smallskip

Finally we prove assertion (A4). Let $\Sigma'$ be a strongly regular fan with
$\Sigma'(1)=\{\rho_1,\ldots,\rho_r\}$. Then for any $\sigma\in\Sigma'$ we again have a chain of facets $\{0\}\preceq\ldots\preceq\sigma'\preceq\sigma$ connected by roots of the fan $\Sigma'$.
This implies $\Gamma(\sigma)=A$ and thus $\Sigma'$ is contained in $\Sigma$.

This completes the proof of Proposition~\ref{propsuit}.
\end{proof}

\begin{corollary} \label{cormsrf}
Every maximal strongly regular fan has the form
$$
\Sigma(P,\AAA):=\{\sigma \in\Omega \ ; \ \Gamma(\sigma)=A\}
$$
for some abelian group $P$ and some admissible collection $\AAA$ of elements in $P$.
\end{corollary}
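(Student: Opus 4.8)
The plan is to assemble the pieces already established, as the statement is essentially a repackaging of Proposition~\ref{propsuit}. Let $\Sigma$ be a maximal strongly regular fan and write $\Sigma(1)=\{\rho_1,\ldots,\rho_r\}$. First I would invoke Lemma~\ref{lemsuit} to conclude that this collection of rays is suitable; equivalently, the set $\NNN=\{n_1,\ldots,n_r\}$ of primitive lattice vectors on these rays is a suitable vector configuration. This is the input required to run the construction from the previous proposition.

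Next I would apply Proposition~\ref{propsuit} to the suitable collection $\rho_1,\ldots,\rho_r$. This produces the fan $\Sigma(P,\AAA)$, where $P$ and $\AAA=\{a_1,\ldots,a_r\}$ arise as the lattice Gale transform of $\NNN$ via the sequence $(*)$, and it guarantees that $\Sigma(P,\AAA)$ is the unique maximal strongly regular fan with this set of rays. The key step is then to identify $\Sigma$ with $\Sigma(P,\AAA)$: by assertion (A4) in the proof of Proposition~\ref{propsuit}, every strongly regular fan with rays $\{\rho_1,\ldots,\rho_r\}$ is a subfan of $\Sigma(P,\AAA)$, so in particular $\Sigma\subseteq\Sigma(P,\AAA)$. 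Since both fans share the same set of rays and $\Sigma$ is maximal, this inclusion cannot be proper, whence $\Sigma=\Sigma(P,\AAA)$.

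Finally I would verify admissibility: because $\NNN$ is suitable, Lemma~\ref{suit-adm} shows that its lattice Gale transform $\AAA$ is an admissible collection in $P$. Thus $\Sigma=\Sigma(P,\AAA)$ has exactly the asserted form for an abelian group $P$ and an admissible collection $\AAA$. I do not expect a serious obstacle here, since all the substance is carried by Proposition~\ref{propsuit} together with Lemmas~\ref{lemsuit} and~\ref{suit-adm}; the only point that genuinely needs care is matching the notion of maximality, namely checking that the combinatorial maximality of $\Sigma$ as a strongly regular fan, combined with the containment (A4), upgrades the inclusion $\Sigma\subseteq\Sigma(P,\AAA)$ to an equality rather than leaving it a proper subfan.
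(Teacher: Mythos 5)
Your proposal is correct and is exactly the argument the paper intends: the corollary is stated without a separate proof precisely because it follows from Lemma~\ref{lemsuit} (suitability of $\Sigma(1)$), assertion (A4) of Proposition~\ref{propsuit} giving $\Sigma\subseteq\Sigma(P,\AAA)$, and maximality (together with (A1), which ensures both fans have the same rays) forcing equality, with admissibility of $\AAA$ supplied by Lemma~\ref{suit-adm}. Your explicit attention to upgrading the inclusion to an equality via the paper's ray-preserving notion of maximality is the right point of care and matches the paper's construction.
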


\begin{corollary}
Let $\Sigma$ be a maximal strongly regular fan and $\sigma$ a nonzero cone in $\Sigma$. Then $\sigma$ is connected with any its facet by a root of the fan $\Sigma$.
\end{corollary}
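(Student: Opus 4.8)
The plan is to combine the explicit description of maximal strongly regular fans from Corollary~\ref{cormsrf} with the connectivity argument already carried out inside the proof of Proposition~\ref{propsuit}. First I would apply Corollary~\ref{cormsrf} to write $\Sigma = \Sigma(P,\AAA)$ for some abelian group $P$ and admissible collection $\AAA = \{a_1,\ldots,a_r\}$ with lattice Gale transform $\NNN = \{n_1,\ldots,n_r\}$ in $N$. Since every strongly regular fan is regular, each cone $\sigma \in \Sigma$ is simplicial, so a facet $\tau$ of $\sigma$ is cut out by deleting exactly one ray: writing $\sigma(1) = \{\rho_k : k \in I\}$, there is a unique index $i \in I$ with $\tau(1) = \{\rho_k : k \in I\setminus\{i\}\}$.

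The key point is that assertion (A2) in the proof of Proposition~\ref{propsuit} was verified for an \emph{arbitrary} facet $\tau$ of $\sigma$, so it already yields the stronger conclusion I need. Concretely, the condition $\Gamma(\sigma) = A$ forces $a_i = \sum_{j \notin I} \alpha_j a_j$ with $\alpha_j \in \ZZ_{\ge 0}$, and dualizing this relation produces a vector $e \in \Hom(N,\ZZ)$ satisfying $\langle n_i,e\rangle = -1$, $\langle n_j,e\rangle \ge 0$ for all $j \ne i$, and $\langle n_k,e\rangle = 0$ for all $k \in I\setminus\{i\}$. Consequently $e|_\sigma \le 0$ and the face $\{\langle\cdot,e\rangle = 0\}$ of $\sigma$ is exactly $\tau$, so $\sigma$ is connected with $\tau$ by $e$ in the sense of the definition preceding Lemma~\ref{ule}. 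The same paragraph of that proof verifies $(R1)$ and $(R2)$, confirming that $e$ is a genuine Demazure root of $\Sigma$.

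Since $\tau$ ranges over all facets of $\sigma$, this establishes the corollary. There is no real obstacle here: the definition of strong regularity only demands connection with \emph{some} facet, whereas the explicit model $\Sigma(P,\AAA)$ automatically supplies a root for \emph{every} facet. The only thing to emphasize is that the construction of $e$ depends solely on the membership relation $\Gamma(\sigma) = A$ and on the choice of the deleted ray $\rho_i$, both of which make sense for any facet; thus quoting the relevant step of Proposition~\ref{propsuit} suffices, with no extra argument required.
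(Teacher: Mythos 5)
Your proposal is correct and follows exactly the route of the paper, whose proof of this corollary is literally a pointer to Corollary~\ref{cormsrf} together with the argument for assertion (A2) inside the proof of Proposition~\ref{propsuit}; you have simply unpacked that reference, correctly observing that the (A2) argument applies to an \emph{arbitrary} facet $\tau$ of $\sigma$ and so already yields the stronger ``any facet'' conclusion, including the verification of conditions $(R1)$ and $(R2)$ for the constructed vector $e$. No further comment is needed.
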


\begin{proof}
The statement follows from Corollary~\ref{cormsrf} and the proof of (A2) in the proof of Proposition~\ref{propsuit}.
\end{proof}

\begin{proof}[Proof of Theorem~\ref{tmain}]
By Proposition~\ref{psh}, $S$-homogeneous toric varieties correspond to strongly regular fans. In turn, maximal $S$-homogeneous toric varieties correspond to maximal strongly regular fans. Lemma~\ref{lemsuit} and Proposition~\ref{propsuit} show that maximal strongly regular fans are in bijection with suitable collections of rays or, equivalently, with suitable vector configurations
$\NNN$ in a lattice $N$. By Lemma~\ref{suit-adm}, the lattice Gale transform establishes a bijection between suitable vector configurations $(N,\NNN)$ and admissible collections $(P,\AAA)$. It~remains to notice that all fans, vector configurations and collections above are defined up to isomorphism of the lattice $N$ and of the group $P$, respectively. So maximal $S$-homogeneous toric varieties correspond to equivalence classes of pairs $(P,\AAA)$.
\end{proof}

\begin{remark}
Let us recall why for an $S$-homogeneous toric variety $X$ the group $P$ constructed above can be interpreted as the divisor class group $\Cl(X)$ and the collection $\AAA$ is the collection of classes $[D_1],\ldots,[D_r]$ of $T$-invariant prime divisors on $X$. It is well known that $T$-invariant prime divisors on $X$ are in bijection with rays of the fan $\Sigma_X$, their classes generate the group $\Cl(X)$, and the defining relations for this generating system are of the form
$$
\langle n_1,e\rangle[D_1]+\ldots+\langle n_r,e\rangle[D_r]=0,
$$
where $e$ runs through the lattice $M$, see e.g.~\cite[Section~3.4]{Fu}. This coincides with the definition of the lattice Gale transform of the vector configuration $\{n_1,\ldots,n_r\}$. Moreover, since any effective Weil divisor on a toric variety is linearly equivalent to a $T$-invariant effective Weil divisor, the semigroup $A$ is the semigroup of classes of effective Weil divisors on $X$.
\end{remark}


\section{First properties and examples of strongly regular fans} \label{s5-1}

Let us list some basic observations on maximal strongly regular fans $\Sigma(P,\AAA)$ and maximal $S$-homogeneous toric varieties $X(P,\AAA)$ corresponding to an admissible collection $\AAA$ in an abelian group $P$.

\smallskip

\begin{enumerate}
\item[(P1)]
The variety $X(P,\AAA)$ is affine if and only if $P=0$ and $\AAA$ is the element $0$ taken $n$ times for some $n\ge 0$. This follows from Example~\ref{ex1}.

\item[(P2)]
The variety $X(P,\AAA)$ is complete if and only if $P$ is a lattice and there are a basis $e_1,\ldots,e_m$ of $P$ and integers $n_1\ge 2,\ldots,n_m\ge 2$ such that $\AAA=\{a_1 (n_1 \ \text{times}),\ldots,a_m (n_m \ \text{times})\}$. This follows from Example~\ref{ex2}.

\item[(P3)]
The variety $X(P,\AAA)$ is quasiaffine if and only if the cone in the space $P_{\QQ}=P\otimes_{\ZZ}\QQ$ generated by the vectors $a\otimes 1$, $a\in\AAA$, coincides with $P_{\QQ}$. Such a variety is the regular locus $X^{\reg}$ of a non-degenerate affine toric variety $X$, cf. \cite[Theorem~2.1]{AKZ}.

\item[(P4)]
If $P=P_1\oplus P_2$ and $\AAA=\AAA_1\oplus\AAA_2$, then $X(P,\AAA)\cong X(P_1,\AAA_1)\times X(P_2,\AAA_2)$.
\end{enumerate}

\smallskip

\begin{example}
Let $P=\ZZ/3\ZZ$ and $\AAA=\{\overline{1},\overline{1}\}$. Then $X(P,\AAA)=X(\sigma)^{\reg}$ with $\sigma=\cone((1,0),(2,3))$. If $\AAA=\{\overline{1},\overline{2}\}$ then $X(P,\AAA)=X(\sigma)^{\reg}$ with $\sigma=\cone((1,0),(1,3))$.
\end{example}

The technique developed in this paper allows to obtain explicit classification results. As an illustration, let us classify maximal $S$-homogeneous toric varieties $X$ with $\dim X=d$ and $\Cl(X)=\ZZ$. To do this, we need to find all admissible collections $\AAA$ in the group $\ZZ$.
We divide all such collections into three types.

\smallskip

{\it Type 1}. The collection $\AAA$ contains both positive and negative elements. Here we have  $X=X(\sigma)^{\reg}$, where $\sigma$ is a strictly convex polyhedral cone with
$d+1$ rays in $\AA^d$.

\smallskip

{\it Type 2}. All elements in $\AAA$ are positive. Consider the weighted projective space
$Z=\PP(a_1,\ldots,a_r)$, see~\cite[Section~2.0]{CLS} for precise definition. Clearly,
the variety $X$ is a smooth open toric subset in $Z$. Using Remark~\ref{remus}, one can check that $X$ coincides with $Z^{\reg}$ if and only if for every subcollection $\AAA'\subseteq\AAA$ that generates the group $\ZZ$, the semigroup generated by $\AAA'$ equals $A$.

\smallskip

{\it Type 3}. All elements in $\AAA$ are non-negative and $\AAA$ contains $0$. In this case $X$ is a direct product of an affine space and a variety of Type~2 with smaller dimension.

\smallskip

\begin{example}
Let us classify varieties of Type~2 for $d=3$. We have two possibility for the variety $Z$.

\begin{enumerate}
\item[1)]
$Z=\PP(1,1,a_3,a_4)$ with some $a_3,a_4\in\ZZ_{>0}$.

\item[2)]
$Z=\PP(a_1,a_1,a_2,a_2)$ with $1<a_1<a_2$ and $(a_1,a_2)=1$.
\end{enumerate}

In the second case we have $X=Z^{\reg}$, while in the first one this is not always true. For instance, with $Z=\PP(1,1,2,3)$ the subset $Z^{\reg}\setminus X$ is an irreducible curve.

\end{example}

The last observation concerns strongly regular fans composed of one-dimensional cones.

\begin{proposition} \label{1-sceleton}
Let $\Sigma$ be a fan with $\Sigma=\Sigma(1)\cup\{0\}$. Then $\Sigma$ is strongly regular if and only if either $\Sigma=\sigma(1)\cup\{0\}$ for a strictly convex polyhedral cone $\sigma$ or
$\Sigma=\Sigma_{\PP^1}$.
\end{proposition}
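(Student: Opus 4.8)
The plan is to reduce strong regularity to a concrete separation condition on the primitive vectors $n_1,\dots,n_r$, and then to analyze that condition combinatorially. Since $\Sigma=\Sigma(1)\cup\{0\}$, its only nonzero cones are the rays $\rho_i$, each of which has the single facet $\{0\}$. Hence $\Sigma$ is strongly regular precisely when every $\rho_i$ is connected with $\{0\}$ by a root $e_i\in\RRR_{\rho_i}$, and I would first spell out what such a root is for this particular fan. Condition $(R1)$ gives $\langle n_i,e_i\rangle=-1$ and $\langle n_j,e_i\rangle\ge 0$ for $j\ne i$, while $(R2)$ becomes unusually restrictive here: if some $\rho_j$ with $j\ne i$ satisfied $\langle n_j,e_i\rangle=0$, then $(R2)$ would force the two-dimensional cone $\cone(\rho_i,\rho_j)$ into $\Sigma$ (and $\rho_j\ne-\rho_i$, since $\langle n_j,e_i\rangle=0$ excludes $n_j=-n_i$), which is impossible as $\Sigma$ has no two-dimensional cones. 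Thus $(R2)$ upgrades the inequalities to $\langle n_j,e_i\rangle\ge 1$ for all $j\ne i$. The working criterion is therefore: $\Sigma$ is strongly regular if and only if for each $i$ there is an integral $e_i$ with $\langle n_i,e_i\rangle=-1$ and $\langle n_j,e_i\rangle\ge 1$ for every $j\ne i$.

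For the implication that the two listed fans are strongly regular I would argue directly. For $\Sigma_{\PP^1}$ (rays $1$ and $-1$ in $\ZZ$) the functionals $e_1=-1$ and $e_2=1$ visibly satisfy the criterion. For $\Sigma=\sigma(1)\cup\{0\}$ with $\sigma$ strictly convex, each $\rho_i$ is an extreme ray of the pointed polyhedral cone $\sigma$ and hence exposed, so there is an integral supporting functional $u_i$ with $\langle n_i,u_i\rangle=0$ and $\langle n_j,u_i\rangle>0$ for $j\ne i$. Choosing $g_i\in M$ with $\langle n_i,g_i\rangle=-1$ (possible since $n_i$ is primitive) and setting $e_i=g_i+k\,u_i$ for $k$ large produces the required root.

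The substance is the converse, which I would organize around the cone $\sigma:=\cone(n_1,\dots,n_r)$. The key step is to test the criterion against any nontrivial nonnegative relation $\sum_k c_k n_k=0$ with $c_k\ge 0$: pairing with $e_i$ for an index with $c_i>0$ gives $\sum_{k\ne i}c_k\langle n_k,e_i\rangle=c_i$, and since each $\langle n_k,e_i\rangle\ge 1$ this yields $c_i\ge\sum_{k\ne i}c_k$. Applying this to every $i$ in the support $I_+=\{k:c_k>0\}$ forces $2c_i\ge\sum_{k\in I_+}c_k$, hence $|I_+|\le 2$; as $|I_+|=1$ would give $n_a=0$, any such relation comes from a pair of opposite rays $n_b=-n_a$. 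I would then split into two cases. If no opposite rays occur there is no such relation, so $\sigma$ is strictly convex; moreover, pairing $e_i$ with a putative expression $n_i=\sum_{j\ne i}d_j n_j$ ($d_j\ge 0$) shows $n_i\notin\cone(n_j:j\ne i)$, so every $n_i$ is an extreme ray and $\Sigma=\sigma(1)\cup\{0\}$. If an opposite pair $n_b=-n_a$ occurs, then any further ray $n_c$ would need $\langle n_a,e_c\rangle\ge 1$ and $\langle n_b,e_c\rangle=-\langle n_a,e_c\rangle\ge 1$ simultaneously, which is absurd; hence there are exactly two rays, and non-degeneracy forces $N_{\QQ}$ to be one-dimensional, giving $\Sigma=\Sigma_{\PP^1}$.

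The step I expect to be the main obstacle is the passage from $(R1)$ to the strict integral inequalities and the integrality they carry. Rational separation of $n_i$ from the remaining vectors is strictly weaker than the existence of an integral root: there are small configurations with $\sigma$ not pointed that admit rational but no integral separators, so one must not attempt to prove pointedness by a purely real convexity argument. The relation-pairing computation above is precisely what sidesteps this difficulty, since it works entirely with the integer values $\langle n_k,e_i\rangle\ge 1$ and never invokes rational separation. I would also take care to use the standing non-degeneracy assumption exactly once, in the opposite-rays case, to identify the resulting two-ray fan with $\Sigma_{\PP^1}$.
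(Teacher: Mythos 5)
Your proof is correct, and its first half coincides with the paper's: you make the same use of condition $(R2)$ to exclude $\langle n_j,e_i\rangle=0$ (the paper notes precisely that $\cone(\rho_i,\rho_j)\notin\Sigma$), arrive at the same working criterion $\langle n_i,e_i\rangle=-1$, $\langle n_j,e_i\rangle\geqslant 1$ for $j\ne i$, and give the same forward direction --- where you usefully spell out the construction $e_i=g_i+k\,u_i$ from a supporting functional $u_i$ and a primitive-dual $g_i$, a step the paper compresses into ``the desired functions $e_i$ exist.'' The converse is where you genuinely diverge. The paper dismisses $r\le 2$ as obvious and, for $r\ge 3$, uses a one-line trick: the sum $e_1+\ldots+e_r$ takes value at least $-1+(r-1)\ge 1$ on every $n_i$, so all rays lie in an open half-space, $\sigma=\cone(n_1,\ldots,n_r)$ is strictly convex, and extremality of each $\rho_i$ follows from the sign pattern of $e_i$ exactly as in your argument. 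You instead pair each root against an arbitrary nonnegative relation $\sum_k c_kn_k=0$, derive $2c_i\ge\sum_{k\in I_+}c_k$ for every $i$ in the support, conclude $|I_+|\le 2$, and identify an opposite pair $n_b=-n_a$ as the only possible obstruction to pointedness, which your case analysis then eliminates or resolves into $\Sigma_{\PP^1}$. Your route is slightly longer but buys uniformity in $r$ (no separately handled small cases) and makes explicit exactly where the standing non-degeneracy assumption is consumed, namely in identifying the two-ray opposite configuration with the fan of $\PP^1$; the paper's route is shorter but leans on the reader for $r\le 2$ and for the implicit use of non-degeneracy there. Your closing meta-remark is also on target: both proofs stand or fall on the integrality bound $\langle n_j,e_i\rangle\ge 1$ --- with merely positive rational values the paper's summation argument would fail just as surely as a naive separation argument --- so the two converses exploit the same arithmetic mechanism through different combinatorial lenses.
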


\begin{proof}
Let $\Sigma(1)=\{\rho_1,\ldots,\rho_r\}$ and $\NNN=\{n_1,\ldots,n_r\}$ be the corresponding vector configuration in $N$. The fan $\Sigma$ is strongly regular if and only if every $\rho_i$ is connected with $\{0\}$ by a root of $\Sigma$, i.e., there exists an element $e_i\in\Hom(N,\ZZ)$ such that $\langle n_i, e_i\rangle=-1$ and $\langle n_j, e_i\rangle>0$ for all $j\ne i$. Note that the case $\langle n_j, e_i\rangle=0$ for some $j$ is excluded because $\Sigma$ does not contain the cone $\cone(\rho_i,\rho_j)$.

Clearly, the fan $\Sigma_{\PP^1}$ is strongly regular. If $\Sigma=\sigma(1)\cup\{0\}$, then for every $\rho_i$ there is a linear function which is zero on $\rho_i$ and positive on all other rays. It shows that the desired functions $e_i$ exist.

Conversely, assume that $\Sigma$ is strongly regular. The case $r\le 2$ is obvious. So we suppose that $r\ge 3$. Then the linear function $e_1+\ldots+e_r$ is positive on all rays and thus the rays generate a strictly convex cone $\sigma$. Existence of the functions $e_i$ implies that every $\rho_i$ is a ray of $\sigma$.
\end{proof}


\section{Non-maximal $S$-homogeneous toric varieties} \label{s6}

In Section~\ref{s5} we gave an explicit description of maximal strongly regular fans. The aim of this section is to develop our combinatorial language further and to describe strongly regular subfans of a given maximal strongly regular fan.

Let $P$ be an abelian group, $\AAA=\{a_1,\ldots,a_r\}$ an admissible collection of elements in $P$, and $A$ the semigroup in $P$ generated by $\AAA$.

\begin{definition}
A \emph{link} is a pair $(a,\AAA')$, where $\AAA'$ is a subcollection of $\AAA$, $a\in\AAA\setminus\AAA'$, and there exists an expression $a=\sum_j \alpha_ja_j$, where $a_j$ runs through $\AAA'$ and $\alpha_j\in\ZZ_{>0}$.
\end{definition}

We say that a subcollection $\BBB\subseteq\AAA$ is \emph{generating}, if the elements of $\BBB$ generate the semigroup $A$. Let $\GG$ be a set of generating collections in $\AAA$.

\begin{definition}
A link $(a,\AAA')$ is called a \emph{$\GG$-link} if for any $\BBB\in\GG$ the condition $\AAA'\cup\{a\}\subseteq\BBB$ implies $\BBB\setminus\{a\}\in\GG$.
\end{definition}

\begin{definition}
A set $\GG$ of generating collections in $\AAA$ is called \emph{connected} if the following conditions hold:
\begin{enumerate}
\item[(C1)]
$\AAA\setminus\{a_i\}\in\GG$ for any $i=1,\ldots,r$;
\item[(C2)]
$\BBB\in\GG$ and $\BBB\subseteq\BBB'\subseteq\AAA$ implies $\BBB'\in\GG$;
\item[(C3)]
if $\BBB\in\GG$ and $\BBB\ne\AAA$ then there is a $\GG$-link $(a,\AAA')$ with $\AAA'\subseteq\BBB$ and $a\notin\BBB$.
\end{enumerate}
\end{definition}

Let $\{\rho_1,\ldots,\rho_r\}$ be a suitable collection of rays in a space $N_{\QQ}$ and
$\NNN=\{n_1,\ldots,n_r\}$ the corresponding suitable vector configuration in $N$. Consider the lattice Gale transform $(P,\AAA)$ of $(N,\NNN)$.

\begin{proposition} \label{propnm}
Strongly regular fans $\Sigma$ with $\Sigma(1)=\{\rho_1,\ldots,\rho_r\}$ are in bijection with connected sets $\GG$ of generating collections in $\AAA$.
\end{proposition}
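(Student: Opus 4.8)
The plan is to make the correspondence of Proposition~\ref{propsuit} and Corollary~\ref{cormsrf} explicit at the level of individual cones, and then to read conditions (C1)--(C3) off as the three defining properties of a strongly regular fan with the prescribed $1$-skeleton. Throughout I identify a cone with its set of rays, so a subset $I\subseteq\{1,\ldots,r\}$ encodes the cone $\sigma_I$ with $\sigma_I(1)=\{\rho_i:i\in I\}$, and I attach to it the subcollection $\BBB_I:=\{a_j:j\notin I\}\subseteq\AAA$. By the definition of $\Sigma(P,\AAA)$ one has $\sigma_I\in\Sigma(P,\AAA)$ if and only if $\Gamma(\sigma_I)=A$, that is, if and only if $\BBB_I$ is a generating subcollection; so $I\mapsto\BBB_I$ is a bijection between the cones of the maximal fan and the generating subcollections of $\AAA$, inclusion-reversing in the sense that $I'\subseteq I$ corresponds to $\BBB_I\subseteq\BBB_{I'}$. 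Since every strongly regular fan is regular, each $\sigma_I\in\Sigma(P,\AAA)$ is a simplicial cone, so an arbitrary subset of its rays spans a face; hence for $I'\subseteq I$ the cone $\sigma_{I'}$ is genuinely a face of $\sigma_I$, and the face order on $\Sigma(P,\AAA)$ is exactly the inclusion order on the index sets. I then define the mutually inverse assignments $\Sigma\mapsto\GG(\Sigma):=\{\BBB_I:\sigma_I\in\Sigma\}$ and $\GG\mapsto\Sigma(\GG):=\{\sigma_I:\BBB_I\in\GG\}$ between collections of cones inside $\Sigma(P,\AAA)$ and sets of generating subcollections, and the task is to show that these restrict to a bijection between strongly regular fans with $\Sigma(1)=\{\rho_1,\ldots,\rho_r\}$ and connected sets $\GG$.

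The first two conditions translate directly. Condition (C1) asks that $\AAA\setminus\{a_i\}=\BBB_{\{i\}}$ lie in $\GG$ for every $i$; as $\AAA$ is admissible these collections are automatically generating, so (C1) is precisely the requirement that all rays $\rho_i$ occur, i.e. $\Sigma(\GG)(1)=\{\rho_1,\ldots,\rho_r\}$. Condition (C2) is upward closure of $\GG$ under inclusion of subcollections; under the inclusion-reversing dictionary this is downward closure of $\Sigma(\GG)$ under the face order, that is, closure under passing to faces. Any face-closed subcollection of the cones of the fan $\Sigma(P,\AAA)$ inherits its separation property (two of its cones meet along a common face, which again lies in the subcollection), so (C2) is exactly the statement that $\Sigma(\GG)$ is a fan. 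Conversely, by assertion (A4) every strongly regular fan with the given rays is a subfan of $\Sigma(P,\AAA)$, so $\GG(\Sigma)$ does consist of generating collections and automatically satisfies (C1) and (C2).

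The heart of the argument, and the step I expect to be the main obstacle, is to match condition (C3) with strong regularity through a precise dictionary between \emph{links} and Demazure roots. A relation $a_i=\sum_{j\in J}\alpha_j a_j$ with all $\alpha_j>0$ corresponds, via the defining sequence $(*)$ of the lattice Gale transform, to the unique $e\in M$ with $\langle n_i,e\rangle=-1$, $\langle n_j,e\rangle=\alpha_j>0$ for $j\in J$, and $\langle n_k,e\rangle=0$ for $k\notin J\cup\{i\}$; thus a link $(a_i,\AAA')$ with $\AAA'=\{a_j:j\in J\}$ names exactly a functional with $\langle n_i,e\rangle=-1$ and $\langle n_j,e\rangle\ge 0$ for $j\ne i$ (condition (R1)), distinguished ray $\rho_i$, whose zero locus among the $n_k$ is $\{k\notin J\cup\{i\}\}$. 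The key lemma to prove is then: the link $(a_i,\AAA')$ is a $\GG$-link if and only if the associated $e$ satisfies condition (R2) for $\Sigma(\GG)$, i.e. $e$ is a Demazure root of $\Sigma(\GG)$. Indeed, for a cone $\sigma_{I'}$ one has $e|_{\sigma_{I'}}=0$ exactly when $I'\cap(J\cup\{i\})=\emptyset$, which is the condition $\AAA'\cup\{a_i\}\subseteq\BBB_{I'}$; and $\cone(\sigma_{I'},\rho_i)=\sigma_{I'\cup\{i\}}$ lies in $\Sigma(\GG)$ exactly when $\BBB_{I'}\setminus\{a_i\}=\BBB_{I'\cup\{i\}}\in\GG$. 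Comparing with the definition of a $\GG$-link shows the two conditions coincide verbatim. With this lemma in hand, (C3) --- the existence, for each nonzero cone $\sigma_I$ (equivalently each $\BBB_I\ne\AAA$), of a $\GG$-link $(a_i,\AAA')$ with $\AAA'\subseteq\BBB_I$ and $a_i\notin\BBB_I$ --- says precisely that some Demazure root $e$ of $\Sigma(\GG)$ has $\langle n_i,e\rangle=-1$, vanishes on $\sigma_I$ away from $\rho_i$, and is nonpositive on $\sigma_I$; that is, $\sigma_I$ is connected with its facet $\sigma_{I\setminus\{i\}}$ by a root of $\Sigma(\GG)$. Hence (C3) is equivalent to strong regularity of $\Sigma(\GG)$.

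Putting the three translations together yields the bijection. If $\GG$ is connected then $\Sigma(\GG)$ is, by (C1), a collection with full $1$-skeleton, by (C2) a fan, and by (C3) strongly regular; if $\Sigma$ is a strongly regular fan with $\Sigma(1)=\{\rho_1,\ldots,\rho_r\}$ then (A4) places it inside $\Sigma(P,\AAA)$, its face-closure gives (C2), its full set of rays gives (C1), and strong regularity together with the link--root lemma gives (C3). The maps $\GG(\Sigma)$ and $\Sigma(\GG)$ are inverse to each other by construction, which completes the proof. The only genuinely delicate point is the link--root lemma, where one must verify that the combinatorial implication defining a $\GG$-link reproduces the geometric condition (R2) for every cone simultaneously; the observation that the zero locus of $e$ depends only on the support $J=\AAA'$ and not on the particular coefficients $\alpha_j$ is what makes this matching well posed.
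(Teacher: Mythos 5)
Your proof is correct and follows essentially the same route as the paper's: the same complementation dictionary $\BBB\mapsto\sigma(\BBB)$ placing everything inside $\Sigma(P,\AAA)$ via Corollary~\ref{cormsrf}, with (C1)--(C2) translated as being a subfan with full $1$-skeleton and (C3) translated as strong regularity through the correspondence between $\GG$-links and Demazure roots. Your explicit link--root lemma (including the check of condition (R2) and the remark that the zero locus of $e$ depends only on the support $\AAA'$) simply spells out in detail what the paper compresses into a single sentence.
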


\begin{proof}
With any subcollection $\BBB\subseteq\AAA$ we associate a cone $\sigma(\BBB)=\cone(\rho_j, a_j\notin\BBB)$. By Corollary~\ref{cormsrf}, the maximal strongly regular fan $\Sigma(P,\AAA)$ is the set of cones associated with all generating subcollections in $\AAA$,
and any strongly regular fan $\Sigma$ with $\Sigma(1)=\{\rho_1,\ldots,\rho_r\}$ is a subfan of $\Sigma(P,\AAA)$.

Let $\Sigma^{\GG}$ be the set of cones $\sigma(\BBB)$, $\BBB\in\GG$. We are going to check that conditions $(C1)$-$(C3)$ are equivalent to the fact that $\Sigma^{\GG}$ is a strongly regular subfan of $\Sigma(P,\AAA)$.

Condition $(C1)$ means that $\Sigma^{\GG}(1)=\{\rho_1,\ldots,\rho_r\}$. Since all cones in
$\Sigma(P,\AAA)$ are regular, condition~$(C2)$ means that with any cone $\sigma(\BBB)$ the fan $\Sigma$ contains all its faces $\sigma(\BBB')$. We know that in $\Sigma(P,\AAA)$ every two cones meet at a face. Hence conditions $(C1)$-$(C2)$ mean that $\Sigma^{\GG}$ is a subfan of $\Sigma(P,\AAA)$ with $\Sigma^{\GG}(1)=\{\rho_1,\ldots,\rho_r\}$.

Let us show that condition $(C3)$ means that the fan $\Sigma^{\GG}$ is strongly regular. Existence of a $\GG$-link $(a_i,\AAA')$ expresses the fact that there is a root $e$ of the fan
$\Sigma^{\GG}$ such that $\langle n_i,e\rangle=-1$ and the condition $\langle n_j,e\rangle>0$ is equivalent to $a_j\in\AAA'$. Then $(C3)$ means that every nonzero cone $\sigma(\BBB)$ in $\Sigma^{\GG}$ is connected with its facet $\sigma(\BBB\cup\{a\})$ by a root associated with the
corresponding $\GG$-link $(a,\AAA')$.
\end{proof}

\begin{example}
Let $P=\ZZ$ and $\AAA=\{a_1=a_2=a_3=1\}$. Here $\Sigma(P,\AAA)=\Sigma_{\PP^2}$. The set
$\GG=\{\AAA,\AAA\setminus\{a_1\},\AAA\setminus\{a_2\},\AAA\setminus\{a_3\}\}$ corresponds to the subfan $\Sigma=\Sigma(P,\AAA)(1)\cup\{0\}$. Links in this case are precisely the pairs $(a_i,\{a_j\})$, $i\ne j$. None of these links is a $\GG$-link. Thus the fan $\Sigma$ is not strongly regular.
\end{example}

\begin{remark}
In general, Proposition~\ref{1-sceleton} and Property $(P3)$ provide a criterion for the set $\GG=\{\AAA,\AAA\setminus\{a_1\},\ldots,\AAA\setminus\{a_r\}\}$ to be connected.
\end{remark}


\section{Toric varieties homogeneous under semisimple group} \label{s7}

In~\cite{AG}, a classification of toric varieties that are homogeneous under an action of a semisimple linear algebraic group is obtained. Let us present this classification in terms of Proposition~\ref{propnm}.

Consider a quasiaffine variety
$$
\XX =\XX(n_1,\dots,n_m) :=
(\KK^{n_1}\setminus \{0\} )\times\dots\times(\KK^{n_m}\setminus\{0\})
$$
with $n_i\geq 2$. The group $G=G_1\times\ldots\times G_m$, where every component $G_i$ is either $\SL(n_i)$ or $\Sp(n_i)$, and $n_i$ is even in the second case, acts on $\XX$ transitively and effectively. Let $\SS=(\KK^{\times})^m$ be an algebraic torus acting on $\XX$ by componentwise scalar multiplication, and
$$
p \, \colon \, \XX \, \to \ \YY \ := \ \PP^{n_1-1}\times\dots\times\PP^{n_m-1}
$$
be the quotient morphism. Fix a closed subgroup $S\subseteq\SS$. The action of the group $S$ on $\XX$ admits a geometric quotient $p_X \colon\XX\to X := \XX/S$. The variety $X$ is toric, it carries the induced action of the quotient group $\SS/S$, and there is a quotient morphism $p^X \colon X \to \YY$ for this action closing the commutative diagram

$$
\xymatrix{
\XX \ar[dr]_p \ar[rr]^{p_X} & & X \ar[dl]^{p^X} \\
 & \YY & }
$$

The induced action of the group $G$ on $X$ is transitive and locally effective.
We say that the $G$-variety $X$ is obtained from $\XX$ by \emph{central factorization}. By \cite[Theorem~1.1]{AG}, every toric variety with a transitive action of a semisimple group can be obtained this way.

The above diagram of quotient morphisms of homogeneous spaces gives rise to the diagram of homomorphisms of divisor class groups
$$
\xymatrix{
\{0\} & & P \ar[ll] \\
 & \ZZ^m \ar[ul] \ar[ur] & }
$$

It shows that an admissible collection corresponding to the variety $X$ is obtained as the projection of the collection corresponding to the variety $\YY$. This way we obtain

\begin{proposition} \label{prophomss}
Toric varieties $X$ homogeneous under an action of a semisimple linear algebraic group are in bijection with pairs $(P,\AAA)$, where
\begin{enumerate}
\item[1)]
$P$ is an abelian group;
\item[2)]
$\AAA=\{a_1\,(n_1 \, \text{times}),\ldots,a_m\,(n_m \, \text{times})\}$ with some $n_1\ge 2,\ldots,n_m\ge 2$, and the elements $a_1,\ldots,a_m$ generate the group $P$.
\end{enumerate}
A variety $X$ represented by a pair $(P,\AAA)$ is an open toric subset in the variety $X(P,\AAA)$
corresponding to the connected set $\GG$ of generating collections in $\AAA$ such that every collection $\BBB$ in $\GG$ contains at least one element from each of the $m$ groups of elements in $\AAA$.
\end{proposition}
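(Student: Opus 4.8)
The plan is to read the pair $(P,\AAA)$ and the set $\GG$ directly off the central-factorization presentation and then to check that the resulting assignment is a bijection. By \cite[Theorem~1.1]{AG} every toric variety $X$ homogeneous under a semisimple group is of the form $X=\XX(n_1,\dots,n_m)/S$ for a closed subgroup $S\subseteq\SS=(\KK^\times)^m$ acting with geometric quotient, so it suffices to analyse this situation. Throughout I write $N=n_1+\dots+n_m$ and regard $\XX$ as an open subset of $\KK^N$ (with block coordinates $x_{ij}$) whose complement $\bigcup_i\{x:x_{ij}=0\ \forall\,j\}$ has codimension $\min_i n_i\ge 2$.

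First I would determine the divisor-theoretic data. Since $S=\Hom(\widehat S,\KK^\times)$ acts on $\XX\subseteq\KK^N$ with codimension-two complement, the quotient $X=\XX/S$ is precisely the characteristic-space presentation of a toric variety in the sense of \cite{ADHL}; hence $\Cl(X)=\widehat S=:P$ and the characteristic space of $X$ is $\XX$ itself (the model case $S=\SS$ recovers $\Cl(\YY)=\widehat\SS=\ZZ^m$). The closed inclusion $S\hookrightarrow\SS$ dualizes to a surjection $\ZZ^m=\widehat\SS\twoheadrightarrow\widehat S=P$, $f_i\mapsto a_i$, which gives part~1) and shows that $a_1,\dots,a_m$ generate $P$. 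The coordinate $x_{ij}$ has $\SS$-weight $f_i$, hence $S$-weight $a_i$, so the class of the $T$-invariant prime divisor $\{x_{ij}=0\}$ equals $a_i$; letting $j$ range over the $i$-th block yields $\AAA=\{a_1\,(n_1\ \text{times}),\dots,a_m\,(n_m\ \text{times})\}$, which is part~2). Because $n_i\ge 2$, every element of $\AAA$ occurs at least twice and therefore lies in the semigroup generated by the remaining elements; thus $\AAA$ is admissible, and by Lemma~\ref{suit-adm} the associated configuration of rays is suitable and spans, so $X$ is non-degenerate.

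Next I would verify that $X$ is $S$-homogeneous. As a toric variety $X$ does not depend on the choice of the factors $G_i$, so I may take $G=\prod_i\SL(n_i)$. Each elementary unipotent subgroup $t\mapsto\mathrm{Id}+t\,e_{kl}$ of $\SL(n_i)$ commutes with the scalar torus $\SS$, hence descends to a nontrivial $\GG_a$-subgroup of $\Aut(X)$; moreover the coordinate torus $(\KK^\times)^N$ normalizes it (acting by rescaling the parameter $t$), so its image is normalized by $T_X=(\KK^\times)^N/S$ and is therefore a root subgroup $H_e$ lying in $S(X)$. Since $G$ is generated by these subgroups and acts transitively on $X$, the group $S(X)$ acts transitively as well, so by Proposition~\ref{psh} the fan $\Sigma_X$ is strongly regular. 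By assertion~(A4) in the proof of Proposition~\ref{propsuit} we get $\Sigma_X\subseteq\Sigma(P,\AAA)$, whence $X$ is an open toric subset of $X(P,\AAA)$, and by Proposition~\ref{propnm} it corresponds to a connected set $\GG$ of generating subcollections of $\AAA$.

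It remains to identify $\GG$ and to obtain the bijection. By Corollary~\ref{cormsrf} together with $\Sigma_X\subseteq\Sigma(P,\AAA)$, every cone of $\Sigma_X$ is $\sigma(\BBB)$ with $\BBB$ generating; and since the characteristic space of $X$ is $\XX$, the affine chart $\{x:x_{ij}\ne 0\text{ whenever }(i,j)\in\BBB\}$ lies in $\XX=\prod_i(\KK^{n_i}\setminus\{0\})$ exactly when $\BBB$ contains at least one element from each of the $m$ blocks. Hence $\GG$ is exactly the set of generating subcollections meeting every block, as asserted. (Alternatively one checks $(C1)$--$(C3)$ for this $\GG$: the first two are immediate, and for $(C3)$, given a generating $\BBB\ne\AAA$ meeting every block and an omitted element $a=a_i$, a second copy of $a_i$ already belongs to $\BBB$, so $(a,\{a_i\})$ is a $\GG$-link.) Finally, the construction is reversible up to equivalence: a pair as in~1)--2) determines the surjection $\ZZ^m\twoheadrightarrow P$, hence the embedding $S:=\Hom(P,\KK^\times)\hookrightarrow\SS$, and central factorization returns a variety with the computed invariants, while two pairs giving isomorphic varieties are equivalent because an isomorphism matches divisor class groups and the collections of boundary-divisor classes. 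The one step I expect to require real care is the identification of the characteristic space of $X$ with $\XX$, equivalently the geometric-quotient description of which cones $\sigma(\BBB)$ lie in $\Sigma_X$; once the dictionary of Section~\ref{s6} is in place, the remaining points are formal.
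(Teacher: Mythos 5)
Your proposal is correct, and its skeleton is the same as the paper's: reduce via \cite[Theorem~1.1]{AG} to the central factorization $X=\XX/S$, then read off $(P,\AAA)$ as the projection of the class data of $\YY$ along the surjection $\ZZ^m=\Cl(\YY)\twoheadrightarrow\Cl(X)=P$ dual to $S\subseteq\SS$. Where you differ is in explicitness: the paper's argument is a short sketch that records the diagram of divisor class groups, asserts that the admissible collection for $X$ is the projection of that for $\YY$, and defers the description of the fan $\Sigma^{\GG}$ to \cite[Proposition~4.5]{AG}, whereas you justify everything internally to this paper. Your identification of $\XX$ as the characteristic space of $X$ is legitimate (the complement of $\XX$ in $\KK^{N}$ has codimension $\min_i n_i\ge 2$ and $S$ acts freely, so $\Cl(X)=\widehat{S}$ and the boundary classes are the $S$-weights $a_i$ of the coordinates $x_{ij}$); your direct proof of $S$-homogeneity via elementary unipotents of $\prod_i\SL(n_i)$, which commute with $\SS$ and are normalized by the coordinate torus, is exactly what licenses the appeal to Proposition~\ref{psh}, assertion (A4) of Proposition~\ref{propsuit}, and Proposition~\ref{propnm}; and your hands-on check of $(C1)$--$(C3)$ for the block-meeting $\GG$, with the $\GG$-link supplied by a second copy of $a_i$ (here $n_i\ge 2$ is used), is a clean substitute for the citation of \cite[Proposition~4.5]{AG}. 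Your observation that any $\BBB$ meeting every block is automatically generating, since $a_1,\ldots,a_m$ already generate the semigroup $A$, is precisely why the proposition can phrase $\GG$ the way it does. The one point where you are no more rigorous than the paper is injectivity of the correspondence: the claim that an abstract isomorphism of varieties matches the collections of boundary-divisor classes tacitly uses that the toric structure is essentially unique, which the paper likewise leaves implicit.
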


It is easy to see that the variety $X$ coincides with $X(P,\AAA)$ if and only if the elements $\{a_1,\ldots,a_m\}\setminus\{a_i\}$ do not generate the semigroup $A$ generated by $\{a_1,\ldots,a_m\}$ for any $i=1,\ldots,m$.

\begin{example}
Let $P=\ZZ/2\ZZ\oplus\ldots\oplus\ZZ/2\ZZ$ ($m$ times) and $a_i=(\overline{0},\ldots,\overline{1},\ldots,\overline{0})$ with $\overline{1}$ at the $i$th place. Then the variety $X$ is quasiaffine and coincides with $X(P,\AAA)$ for any $n_1,\ldots,n_m$.
\end{example}

\begin{remark}
In \cite[Proposition~4.5]{AG}, one can find a description of the fan $\Sigma^{\GG}$, where $\GG$ is as in Proposition~\ref{prophomss}.
\end{remark}

\begin{remark}
Toric varieties $X_{\Sigma}$, where a fan $\Sigma$ contains some fan $\Sigma^{\GG}$ as a subfan, provide examples of embedding with small boundary of homogeneous spaces of semisimple groups, see~\cite{AH} for details.
\end{remark}

\begin{problem}
Classify toric varieties homogeneous under linear algebraic groups.
\end{problem}


\section{Homogeneous toric varieties} \label{s8}

We recall that a toric variety $X$ is called homogeneous if the automorphism group $\Aut(X)$ acts on $X$ transitively.

\begin{theorem} \label{thom}
Let $X$ be a non-degenerate homogeneous toric variety. Then there exists an open toric embedding
$X\subseteq X'$ into a maximal $S$-homogeneous toric variety $X'$ with
${\codim_{X'}(X'\setminus X)\ge 2}$.
\end{theorem}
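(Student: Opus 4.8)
The plan is to prove the stronger combinatorial statement that, for a non-degenerate homogeneous $X=X_\Sigma$, the collection of rays $\Sigma(1)=\{\rho_1,\dots,\rho_r\}$ is suitable and $\Sigma$ is a subfan of the maximal strongly regular fan $\Sigma(P,\AAA)$ built on these rays, where $(P,\AAA)$ is the lattice Gale transform of the primitive vectors $n_1,\dots,n_r$. Once this is in place, setting $X'=X(P,\AAA)$ gives an open toric embedding $X\subseteq X'$ into a maximal $S$-homogeneous variety by Proposition~\ref{psh}, Proposition~\ref{propsuit} and Theorem~\ref{tmain}; since $\Sigma(1)=\Sigma(P,\AAA)(1)$, no invariant divisor is removed and the complement $X'\setminus X=\bigcup_{\tau\in\Sigma(P,\AAA)\setminus\Sigma}\OOO_\tau$ consists of orbits of cones of dimension $\ge 2$, whence $\codim_{X'}(X'\setminus X)\ge 2$. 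By Corollary~\ref{cormsrf} the containment $\Sigma\subseteq\Sigma(P,\AAA)$ amounts to the single assertion that $\Gamma(\sigma)=A$ for every cone $\sigma\in\Sigma$, so the whole theorem reduces to proving this.

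To establish $\Gamma(\sigma)=A$ I would use homogeneity infinitesimally. In characteristic zero a transitive action of $\Aut(X)$, which is generated by algebraic subgroups, forces the module of global vector fields $H^0(X,\Theta_X)$ to span the tangent space $T_xX$ at every point $x$. This module is a $T$-module and decomposes into weight spaces $H^0(X,\Theta_X)_e$, $e\in M$; for non-degenerate $X$ the weight-zero part is exactly $\lie T$, and Demazure's computation (\cite{De,Oda,Cox}) shows that a regular weight-$e$ field is, up to scalar, $\chi^e$ times an invariant field, whose weight necessarily satisfies $\langle n_\rho,e\rangle\ge -1$ for all rays $\rho$. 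Fix a cone $\sigma\in\Sigma$ with $\sigma(1)=\{\rho_i,\ i\in I\}$, a ray $\rho_i$ of $\sigma$, and a point $x_0\in\OOO_\sigma$. Under the stabilizer torus the normal space $T_{x_0}X/T_{x_0}\OOO_\sigma$ splits into the lines transverse to the divisors $D_k$, $k\in I$. The key local fact is that a weight-$e$ field which is nonzero and transverse to $D_i$ at $x_0$ must satisfy $\langle n_i,e\rangle=-1$, $\langle n_k,e\rangle=0$ for $k\in I\setminus\{i\}$, and, by regularity, $\langle n_j,e\rangle\ge 0$ for all $j\ne i$; that is, $e$ is a Demazure root connecting $\sigma$ to its facet opposite $\rho_i$.

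Combining the two ingredients, infinitesimal transitivity at $x_0$ should produce, for each ray $\rho_i$ of each cone $\sigma\in\Sigma$, a root $e$ with $\langle n_i,e\rangle=-1$, $\langle n_j,e\rangle\ge 0$ $(j\ne i)$ and $\langle n_k,e\rangle=0$ $(k\in I\setminus\{i\})$. Taking $\sigma=\rho_i$ already yields suitability of $\Sigma(1)$, so $\Sigma(P,\AAA)$ is defined. For general $\sigma$, the relations $\langle n_i,e\rangle=-1$, $\langle n_j,e\rangle=\alpha_j\ge 0$ $(j\notin I)$, $\langle n_k,e\rangle=0$ $(k\in I)$ translate, after the lattice Gale transform, into $a_i=\sum_{j\notin I}\alpha_j a_j$ with $\alpha_j\in\ZZ_{\ge 0}$, exactly as in the proof of Lemma~\ref{suit-adm} and of assertion (A2) in Proposition~\ref{propsuit}. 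Running over all $i\in I$ shows that $\{a_j:\ j\notin I\}$ generates the whole semigroup $A$, i.e. $\Gamma(\sigma)=A$ and $\sigma\in\Sigma(P,\AAA)$. Hence $\Sigma\subseteq\Sigma(P,\AAA)$ and the embedding described in the first paragraph completes the proof.

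The main obstacle is the local infinitesimal analysis of the second paragraph: passing from the global transitivity of $\Aut(X)$ — a group that need not be algebraic for non-complete $X$ — to the statement that regular vector fields span every tangent space, and then pinning down that the only weight components contributing a transverse direction at a boundary orbit are the Demazure roots connecting $\sigma$ to its facets. In particular the condition $\langle n_k,e\rangle=0$ for the remaining rays $k\in I\setminus\{i\}$, which guarantees that the field does not vanish along $\OOO_\sigma$, is the delicate point. Everything else is a bookkeeping translation through the lattice Gale duality already set up in Sections~\ref{s4} and~\ref{s5}.
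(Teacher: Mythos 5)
Your reduction and final bookkeeping are exactly the paper's: the theorem is equivalent to showing $\Gamma(\sigma)=A$ for every $\sigma\in\Sigma$, which by Lemma~\ref{suit-adm} gives suitability of $\Sigma(1)$ (the case $\sigma=\rho_i$) and by Corollary~\ref{cormsrf} gives $\Sigma\subseteq\Sigma(P,\AAA)$, and $\Sigma(1)=\Sigma(P,\AAA)(1)$ forces $\codim_{X'}(X'\setminus X)\ge 2$. Your local analysis is also sound: since homogeneity makes $X$ smooth, in the chart of a regular cone $\sigma$ a regular vector field of weight $e\ne 0$ that is nonzero and transverse to $D_i$ at a point of $\OOO_{\sigma}$ must be proportional to $\chi^e\partial_{n_i}$ with $\langle n_i,e\rangle=-1$ and $\langle n_k,e\rangle=0$ for $k\in I\setminus\{i\}$ (any positive pairing on a ray of $\sigma$ kills the transverse coefficient along $\OOO_{\sigma}$), while regularity on the charts $X_{\rho_j}$ forces $\langle n_j,e\rangle\ge 0$ for $j\notin I$; through the Gale transform this is precisely $a_i=\sum_{j\notin I}\alpha_j a_j$, whence $\Gamma(\sigma)=A$. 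Note that this produces only the $(R1)$-type inequalities and not condition $(R2)$, so it proves the theorem but does not make $\Sigma$ itself strongly regular --- consistent with Conjecture~\ref{con} remaining open.

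The one genuine gap --- which you flag yourself --- is the justification of infinitesimal transitivity. Your premise that $\Aut(X)$ ``is generated by algebraic subgroups'' is unjustified for non-complete $X$ (for instance $\Aut(\AA^2)$ contains automorphisms lying in no algebraic subgroup), and this is essentially why the conjecture is a conjecture. Fortunately the claim you need is true for a much softer reason, with no structure theory of $\Aut(X)$ at all: for \emph{any} abstract automorphism $\phi$ of $X$ and any $v\in H^0(X,\Theta_X)$, the pushforward $\phi_*v$ is again a global regular vector field. Hence the locus $U$ of points where global vector fields span the tangent space is $\Aut(X)$-invariant; it is open (if finitely many fields span at $x$, a local determinant shows they span nearby) and nonempty, since $\lie T\subseteq H^0(X,\Theta_X)$ already spans along the open $T$-orbit. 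Transitivity then gives $U=X$, and your argument goes through. With this patch your proof is correct and genuinely different from the paper's: instead of tangent spaces, the paper transports the divisorial invariant $C(x)\subseteq\Cl(X)$, the semigroup of classes of effective divisors whose support avoids $x$, which is constant by homogeneity, equals the full effective semigroup at points of the open orbit, and equals $\Gamma(\sigma)$ on $\OOO_{\sigma}$ via a $T$-linearization argument (Lemma~\ref{tlin}). Both are ``transport an invariant from the open orbit'' proofs; the paper's route avoids all coordinate computations with vector fields, while yours has the merit of exhibiting the Demazure-type weights responsible for transversality, tying the statement more directly to the root formalism of Section~\ref{s2}.
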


\begin{corollary}
Every homogeneous toric variety is quasiprojective.
\end{corollary}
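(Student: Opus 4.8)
The plan is to deduce the statement from Theorem~\ref{thom} by showing that every maximal $S$-homogeneous toric variety is quasiprojective, and then invoking the fact that an open subvariety of a quasiprojective variety is again quasiprojective. First I would reduce to the non-degenerate case: a degenerate homogeneous toric variety is equivariantly isomorphic to $T_0\times X_0$ with $T_0$ a torus and $X_0$ non-degenerate and homogeneous (cf.\ Section~\ref{s1}); since $T_0$ is affine and a product of two quasiprojective varieties is quasiprojective via the Segre embedding, it suffices to treat non-degenerate $X$. For such $X$, Theorem~\ref{thom} gives an open toric embedding $X\subseteq X'$ into a maximal $S$-homogeneous toric variety $X'$, and only the openness of $X$ in $X'$ is needed below.

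By Corollary~\ref{cormsrf} we may write $X'=X_{\Sigma(P,\AAA)}$ with $\Sigma(P,\AAA)=\{\sigma\in\Omega : \Gamma(\sigma)=A\}$. Set $\omega:=\cone(a_1,\ldots,a_r)\subseteq P_{\QQ}$; since $\AAA$ generates $P$, the cone $\omega$ is full-dimensional. I would choose a generic weight $u\in\relint(\omega)$, that is, one lying in the interior of a chamber of the toric GIT (secondary) fan attached to $\AAA$; such $u$ exists because the chamber walls form finitely many cones of smaller dimension while $\relint(\omega)$ is a nonempty open subset of $P_{\QQ}$. The associated GIT quotient of $\AA^r$ by the quasitorus $\Spec\KK[P]$ is projective over the affine quotient $\Spec(\KK[x_1,\ldots,x_r]^{\Spec\KK[P]})$, hence quasiprojective; by the standard description of toric GIT (see \cite{CLS,ADHL}) its fan is $\Sigma_u=\{\sigma_I : u\in\relint\cone(a_j, j\notin I)\}$, where $\sigma_I=\cone(\rho_i, i\in I)$.

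The key observation is that the defining condition of $\Sigma(P,\AAA)$ forces the Gale-dual cones to be as large as possible. Indeed, if $\Gamma(\sigma_I)=A$, then every $a_i$ with $i\in I$ is a non-negative integer combination of the $a_j$ with $j\notin I$, so $\cone(a_j, j\notin I)=\cone(a_1,\ldots,a_r)=\omega$; consequently $u\in\relint(\omega)=\relint\cone(a_j, j\notin I)$ and $\sigma_I\in\Sigma_u$. Therefore $\Sigma(P,\AAA)$ is a subfan of $\Sigma_u$, so $X'=X_{\Sigma(P,\AAA)}$ is an open toric subvariety of the quasiprojective variety $X_{\Sigma_u}$ and is itself quasiprojective. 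Since $X$ is open in $X'$, it follows that $X$ is quasiprojective.

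The hard part is not the final containment, which is immediate from the semigroup condition, but rather having the toric GIT package available in the required generality: the existence of a generic chamber weight, the identification of the GIT fan with $\Sigma_u$, and the quasiprojectivity (projectivity over the affine quotient) of the quotient, all for a possibly torsion grading group $P$ and hence a genuine quasitorus action on $\AA^r$. These I would simply cite from \cite{CLS,ADHL}; the only genuinely new input is the remark that $\Gamma(\sigma)=A$ makes the dual cone equal to the full cone $\omega$, which is precisely what allows a single weight $u$ to witness quasiprojectivity for the entire fan $\Sigma(P,\AAA)$ at once.
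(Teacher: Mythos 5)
Your proof is correct and follows essentially the same route as the paper: reduce via Theorem~\ref{thom} (plus splitting off the torus factor in the degenerate case) to showing that every maximal $S$-homogeneous toric variety $X(P,\AAA)$ is quasiprojective, and deduce this from the Gale-dual/toric-GIT description of $\Sigma(P,\AAA)$ given in Corollary~\ref{cormsrf}. Where the paper simply cites \cite[Corollary~10.3]{BH} and \cite[Theorem~2.2.2.6]{ADHL}, you unpack those citations into an explicit secondary-fan argument, and your key observation --- that $\Gamma(\sigma)=A$ forces every Gale-dual cone $\cone(a_j,\ j\notin I)$ to coincide with $\omega$, so a single generic weight $u\in\relint(\omega)$ certifies that $\Sigma(P,\AAA)$ is a subfan of the quasiprojective GIT fan $\Sigma_u$ --- is precisely the verification left implicit in the paper's appeal to those results.
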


\begin{proof}
It suffices to show that every maximal $S$-homogeneous toric variety is quasiprojective. This follows from Corollary~\ref{cormsrf}, \cite[Corollary~10.3]{BH} and \cite[Theorem~2.2.2.6]{ADHL}.
\end{proof}

We begin the proof of Theorem~\ref{thom} with a preliminary result.

\begin{lemma} \label{tlin}
Let $X$ be a non-degenerate smooth toric variety and $x$ a point on $X$. Consider an effective  divisor $D$ on $X$ whose support does not contain $x$. Then there is a $T$-invariant effective divisor $D'$ which is linearly equivalent to $D$ and whose support does not contain $x$.
\end{lemma}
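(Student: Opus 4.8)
The plan is to realize $D$ as the zero divisor of a global section of a line bundle, expand that section into $T$-eigensections, and extract from the expansion the $T$-invariant effective divisor we want. First I would use smoothness to know that $\OOO_X(D)$ is a line bundle, and non-degeneracy (the sequence $(*)$) to know that the classes $[D_1],\ldots,[D_r]$ of the $T$-invariant prime divisors generate $\Cl(X)$; hence I may fix a $T$-invariant divisor $D_0=\sum_i a_iD_i$ (with $a_i\in\ZZ$, not necessarily effective) such that $D_0\sim D$. Writing $D=D_0+\operatorname{div}(f)$, effectivity of $D$ says precisely that $f$ is a global section of $\OOO_X(D_0)$, so by the standard description of $H^0$ on a toric variety I can decompose $f=\sum_m c_m\chi^m$, the sum running over the lattice points $m$ of the polytope $P_{D_0}=\{m\in M_{\QQ}:\langle m,n_i\rangle\ge -a_i\ \forall i\}$. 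Each $\chi^m$ gives an effective $T$-invariant divisor $D_m:=D_0+\operatorname{div}(\chi^m)=\sum_i(a_i+\langle m,n_i\rangle)D_i\sim D$, and if $s_m,s_D$ denote the corresponding sections of $\OOO_X(D_0)$, then $s_D=\sum_m c_m s_m$.

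Next, let $\sigma\in\Sigma$ be the cone with $x\in\OOO_\sigma$. That the support of $D$ avoids $x$ is equivalent to $s_D(x)\ne 0$ in the fibre $\OOO_X(D_0)_x$. The key step is to prove: for $m\in P_{D_0}\cap M$ one has $s_m(x)\ne 0$ if and only if $a_i+\langle m,n_i\rangle=0$ for every $\rho_i\in\sigma(1)$, i.e.\ if and only if $x\notin\Supp(D_m)$. To establish this I would pass to the affine chart $X_\sigma$, in which the only $T$-invariant prime divisors appearing are the $D_i$ with $\rho_i\in\sigma(1)$. Since $X$ is smooth the cone $\sigma$ is regular, so the $n_i$ with $\rho_i\in\sigma(1)$ form part of a basis of $N$ and there is $m_0\in M$ with $\langle m_0,n_i\rangle=-a_i$ for all $\rho_i\in\sigma(1)$. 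Then $s_{m_0}$ has empty divisor on $X_\sigma$, hence trivialises $\OOO_X(D_0)$ there, and $s_m=\chi^{m-m_0}s_{m_0}$ with $\chi^{m-m_0}$ regular on $X_\sigma$ because $\langle m-m_0,n_i\rangle=a_i+\langle m,n_i\rangle\ge 0$ shows $m-m_0\in\sigma^{\vee}$. By the orbit--cone correspondence $\chi^{m-m_0}$ is nonzero on $\OOO_\sigma$ exactly when $m-m_0\in\sigma^{\perp}$, which rewrites as $a_i+\langle m,n_i\rangle=0$ for all $\rho_i\in\sigma(1)$; this is the claim.

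Finally, since $s_D(x)\ne 0$ and $s_D=\sum_m c_m s_m$ holds inside the one-dimensional fibre $\OOO_X(D_0)_x$, at least one summand has $s_m(x)\ne 0$ with $m\in P_{D_0}\cap M$. For that $m$ the divisor $D':=D_m$ is $T$-invariant, effective (as $m\in P_{D_0}$), linearly equivalent to $D_0\sim D$, and by the claim its support does not contain $x$, which is exactly the assertion of the lemma. I expect the main obstacle to be the middle step: verifying cleanly that the vanishing of the monomial section $s_m$ at the point $x\in\OOO_\sigma$ is governed precisely by the combinatorial condition $a_i+\langle m,n_i\rangle=0$ on $\sigma(1)$. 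This is where smoothness of $X$ is genuinely used, since it guarantees simultaneously that $\OOO_X(D_0)$ is an honest line bundle and that the trivialising exponent $m_0\in M$ exists; on a singular toric variety the local trivialisation, and hence the whole reduction, can fail.
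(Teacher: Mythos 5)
Your proof is correct, and its core is the same as the paper's: expand a section of the line bundle associated with $D$ into $T$-eigensections and select one that does not vanish at $x$; its divisor is the required $D'$. The difference lies in the implementation. The paper argues abstractly: $\OOO_X(D)$ admits a $T$-linearization, so $H^0(X,\OOO_X(D))$ is a rational $T$-module spanned by eigenvectors, and since the sections whose divisors pass through $x$ form a proper linear subspace (proper because the section cutting out $D$ lies outside it), a spanning set of eigenvectors cannot be contained in it — no computation needed. You instead fix a $T$-invariant reference divisor $D_0\sim D$ (using that $\Cl(X)$ is generated by the classes $[D_i]$), invoke the standard decomposition $H^0(X,\OOO_X(D_0))=\bigoplus_m \KK\cdot\chi^m$ over the lattice points of $P_{D_0}$, and verify by hand, via the trivializing exponent $m_0$ on the chart $X_\sigma$ and the orbit--cone correspondence, that $s_m(x)\ne 0$ exactly when $a_i+\langle m,n_i\rangle=0$ for all $\rho_i\in\sigma(1)$. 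All these steps are sound (note the decomposition of $H^0$ does not require completeness, and each section is a finite sum of characters, so nothing breaks when $P_{D_0}$ is unbounded). Your middle step, which you flag as the main obstacle, is actually the general fact that the vanishing locus of a nonzero global section equals the support of its divisor of zeros, so it could be cited rather than re-proved; on the other hand, your explicit computation makes transparent where smoothness enters (Cartier-ness of $D_0$ and regularity of $\sigma$ giving $m_0\in M$), whereas in the paper's version smoothness is hidden in the passage from the Weil divisor $D$ to a line bundle. Your route is longer but self-contained and combinatorially explicit; the paper's is shorter and uses only linearization theory plus a one-line linear-algebra observation.
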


\begin{proof}
The divisor $D$ defines a line bundle $L\to X$ and $L$ admits a $T$-linearization, see e.g. \cite[Section~4.2.2]{ADHL} for details. In particular, the space of global sections $H^0(X,L)$ carries a structure of a rational $T$-module, and any vector in $H^0(X,L)$ is a sum of $T$-eigenvectors. Sections that represent effective divisors with support not passing through $x$ form a subspace $U$ in $H^0(X,L)$. By assumption, the subspace $U$ is proper. Hence there is a $T$-eigenvector $v$ in $H^0(X,L)\setminus U$. An effective $T$-invariant divisor on $X$ represented by $v$ is the desired divisor $D'$.
\end{proof}

\begin{proof}[Proof of Theorem~\ref{thom}]
Let $X$ be a non-degenerate homogeneous toric variety and $\Sigma$ the associated fan.
Consider the set of rays $\Sigma(1)=\{\rho_1,\ldots,\rho_r\}$, the corresponding vector configuration $\{n_1,\ldots,n_r\}$ in the lattice $N$, and the lattice Gale transform $(P,\AAA)$.

With any point $x\in X$ one associates the semigroup $C(x)$ in $\Cl(X)$ of classes of effective divisors on $X$ whose support does not contain $x$. For a point $x$ in the open $T$-orbit on $X$ the semigroup $C(x)$ coincides with the semigroup $C$ of all classes of effective divisors on $X$. Since $X$ is homogeneous, we have $C(x)=C$ for all points $x\in X$.

By Lemma~\ref{tlin}, the semigroup $C(x)$ equals the semigroup generated by classes of $T$-invariant prime divisors on $X$ which do not pass through $x$. Under identification of the group $P$ with $\Cl(X)$, the semigroup $C(x)$ coincides with the semigroup $\Gamma(\sigma)$, where $\sigma$ is a cone in $\Sigma$ associated with the $T$-orbit of the point $x$. It shows that for a homogeneous toric variety $X$ we have $\Gamma(\sigma)=A$ for every $\sigma\in\Sigma$, where $A$ is the semigroup generated by the collection $\AAA$. In particular, $\Gamma(\rho_i)=A$ for any $i=1,\ldots,r$. It means that the set $\{\rho_1,\ldots,\rho_r\}$ is suitable or, equivalently, the collection $\AAA$ is admissible.

Finally, the condition $\Gamma(\sigma)=A$ implies that the fan $\Sigma$ is a subfan of the fan $\Sigma(P,\AAA)$. Equivalently, $X$ is an open toric subset of the maximal $S$-homogeneous toric variety $X'=X(P,\AAA)$. The condition $\Sigma(1)=\Sigma(P,\AAA)(1)$ implies
${\codim_{X'}(X'\setminus X)\ge 2}$.
\end{proof}

\begin{conjecture} \label{con}
Every non-degenerate homogeneous toric variety is $S$-homogeneous.
\end{conjecture}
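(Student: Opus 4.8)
The plan is to reduce Conjecture~\ref{con} to a statement purely about the subgroup $G(X)$ generated by the torus and the root subgroups, and then to isolate the missing ingredient as a structural description of $\Aut(X)$. First I would record what Theorem~\ref{thom} already supplies. Writing $\Sigma$ for the fan of the non-degenerate homogeneous variety $X$, with rays $\{\rho_1,\ldots,\rho_r\}$, lattice Gale transform $(P,\AAA)$ and $A$ the semigroup generated by $\AAA$, the proof of Theorem~\ref{thom} gives $\Gamma(\sigma)=A$ for every $\sigma\in\Sigma$. By Corollary~\ref{cormsrf} this means exactly that $\AAA$ is admissible and that $\Sigma$ is a subfan, with the full set of rays, of the maximal strongly regular fan $\Sigma(P,\AAA)$. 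Thus the only thing separating $X$ from being $S$-homogeneous is that $\Sigma$ may fail to be strongly regular \emph{as a subfan}: a Demazure root $e$ of $\Sigma(P,\AAA)$ connecting a cone to a facet need not remain a root of the smaller fan $\Sigma$, because condition $(R2)$ can break when $\cone(\sigma',\rho_e)$ lies in $\Sigma(P,\AAA)$ but not in $\Sigma$.

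By Proposition~\ref{psh} it suffices to prove that $\Sigma$ is strongly regular, and by Lemma~\ref{lll} this is equivalent to transitivity of $G(X)$. Hence, for non-degenerate $X$, the conjecture is precisely the implication: transitivity of $\Aut(X)$ forces transitivity of the subgroup $G(X)$. I would argue by contraposition. If $\Sigma$ is not strongly regular, choose a nonzero cone $\tau\in\Sigma$ connected to none of its facets by a root of $\Sigma$; exactly as in the proof of Lemma~\ref{lll}, the closure $\overline{\OOO_\tau}$ of the corresponding $T$-orbit is $S(X)$-invariant, hence a proper, closed, $G(X)$-invariant subset of $X$. To contradict homogeneity I must promote $G(X)$-invariance of $\overline{\OOO_\tau}$ to $\Aut(X)$-invariance, that is, I must control the automorphisms of $X$ lying outside $G(X)$.

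This last step is the main obstacle and is exactly what keeps the assertion a conjecture rather than a theorem. In the two classical regimes the obstacle is absent: if $X$ is complete then $\Aut(X)$ is a linear algebraic group and homogeneity coincides with $S$-homogeneity by Example~\ref{ex2}; if $X$ is affine and non-degenerate then $X=\AA^n$, which is $S$-homogeneous by Example~\ref{ex1} even though $\Aut(\AA^n)$ is enormous, because the root subgroups alone (the translations) already act transitively.

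For a uniform attack I would pass to the Cox ring of $X(P,\AAA)$, the polynomial ring $\KK[T_1,\ldots,T_r]$ graded by $P$ with $\deg T_i=a_i$, and describe $\Aut(X)$ through the $P$-graded automorphisms of this ring. The delicate feature is that the amount of rigidity this yields depends on how finely $P$ separates the degrees $a_i$: when the grading is rich, the graded automorphisms should be generated by the Cox torus, the root subgroups, and permutations of equidegree variables, all of which normalize $S(X)$, forcing $\overline{\OOO_\tau}$ to be $\Aut(X)$-invariant and producing the contradiction; but when $P$ is coarse---in the extreme $P=0$, $X=\AA^n$---the graded automorphism group is unconstrained, and one must instead invoke transitivity of $S(X)$ for independent reasons, as in Example~\ref{ex1}. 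Reconciling these two regimes in a single argument, presumably by using the product splitting of property (P4) to strip off affine factors and reduce to the case where $P$ genuinely rigidifies the automorphisms, is the heart of the remaining difficulty, and I expect the admissibility of $\AAA$ to be the combinatorial hypothesis that drives the rigidity in the constrained case.
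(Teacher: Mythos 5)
You were asked to prove Conjecture~\ref{con}, and the paper itself contains no proof of it: the statement is left open, with only the remark that, by Theorem~\ref{thom}, it amounts to showing that $X_\Sigma$ is never homogeneous when $\Sigma$ is a non-strongly-regular subfan of a maximal strongly regular fan, plus the evidence of low-dimensional computations. Your analysis reproduces exactly this reformulation: via Proposition~\ref{psh} and Lemma~\ref{lll} the conjecture becomes the implication ``$\Aut(X)$ transitive $\Rightarrow$ $G(X)$ transitive''; via the proof of Theorem~\ref{thom} one knows $\Gamma(\sigma)=A$ for all $\sigma\in\Sigma$, so the only possible failure is of condition $(R2)$ for roots of $\Sigma(P,\AAA)$ restricted to the subfan $\Sigma$; and the contrapositive argument through the $S(X)$-invariant closure $\overline{\OOO_\tau}$ is exactly the mechanism of Lemma~\ref{lll}. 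All of this is correct and agrees with the paper, and you are candid that promoting $G(X)$-invariance of $\overline{\OOO_\tau}$ to $\Aut(X)$-invariance is precisely what is missing. So your text is a sound reduction and obstruction analysis, not a proof --- which is the honest state of affairs, since no proof exists.

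Since you go on to sketch a Cox-ring strategy, let me name the unproved steps it rests on. First, describing $\Aut(X)$ by $P$-graded automorphisms of $\KK[T_1,\ldots,T_r]$ presupposes that every automorphism of the non-complete variety $X$ lifts to its characteristic space; this lifting holds only under hypotheses (cf.\ \cite[Section~4.2]{ADHL}) and you invoke it silently. Second, your generation claim --- Cox torus, elementary substitutions $T_i\mapsto T_i+f$ with $\deg f=a_i$, permutations of equidegree variables --- is, where it is known, essentially Cox's theorem for complete simplicial toric varieties, and in that regime the conjecture is already settled by Example~\ref{ex2}; in the genuinely open non-complete regime this generation statement is itself an open assertion carrying the full weight of the conjecture. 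Third, even granting it for the polynomial ring, the relevant group is the subgroup of graded automorphisms preserving the characteristic space $\widehat{X}$ of the \emph{open subset} $X$, and a subgroup need not be generated by its intersection with the generators of the ambient group: a root of $\Sigma(P,\AAA)$ violating $(R2)$ for $\Sigma$ does not act on $X$ at all, which is exactly the phenomenon you would have to rule out rather than assume away. Finally, the splitting (P4) concerns the maximal variety $X(P,\AAA)$; a homogeneous $X$ realized as a proper big open toric subset need not inherit a compatible product decomposition, so ``stripping off affine factors'' needs its own argument. None of this is an error in what you wrote, but be aware that, beyond the paper's own reformulation, your proposal establishes nothing new.
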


In view of Theorem~\ref{thom}, Conjecture~\ref{con} means that every toric variety $X_{\Sigma}$, where $\Sigma$ is a non-strongly regular subfan of a maximal strongly regular fan, is not homogeneous. Computations with low-dimensional toric varieties confirm the conjecture.


\end{document}